\newcommand{\lina}[1]{  \ifthenelse{\boolean{showcomments}}
{ \textcolor{red}{(Lina says:  #1)}} {}  }
\newcommand{\Guannan}[1]{  \ifthenelse{\boolean{showcomments}}
{ \textcolor{blue}{(Guannan says:  #1)}} {}  }
\newcommand{\lijun}[1]{\ifthenelse{\boolean{showcomments}}
{ \textcolor{blue}{(Munzer says: #1)} } {} }
\def\ba{\begin{array}}
\def\ea{\end{array}}
\newcommand{\beq}{\begin{equation}}
\newcommand{\eeq}{\end{equation}}
\newcommand{\bq}{\begin{eqnarray}}
\newcommand{\eq}{\end{eqnarray}}
\newcommand{\bqn}{\begin{eqnarray*}}
\newcommand{\eqn}{\end{eqnarray*}}
\newcommand{\bee}{\begin{enumerate}}
\newcommand{\eee}{\end{enumerate}}
\newcommand{\bi}{\begin{itemize}}
\newcommand{\ei}{\end{itemize}}
\newcommand{\btab}{\begin{tabular}}
\newcommand{\etab}{\end{tabular}}
\newtheorem{theorem}{Theorem}
\newtheorem{proposition}[theorem]{Proposition}
\newtheorem{lemma}[theorem]{Lemma}
\newtheorem{assumption}{Assumption}
\begin{document}

\title{Real-time Decentralized and Robust Voltage Control in Distribution Networks}
\author{ Guannan Qu, Na Li, and Munther Dahleh   \thanks{Some preliminary results have been published at the 2014 Allerton Conference on Communication, Control and Computing.   This journal version contains several new results that are not contained in the conference paper, especially the robustness of the control algorithms and the related proofs. }\thanks{G. Qu and N. Li are with the School of Engineering and Applied Sciences, Harvard University, Cambridge, MA 02138, USA (Emails: gqu@g.harvard.edu, nali@seas.harvard.edu). M. Dahleh are with the Laboratory of Information and Decision Systems, Massachusetts Institute of Technology, Cambridge, MA 02139, USA (Email: dahleh@mit.edu.) \textit{Corresponding Author: N. Li}}}

\maketitle

\thispagestyle{plain}
\pagestyle{plain}

\begin{abstract}
Voltage control plays an important role in the operation of electricity distribution networks, especially when there is a large penetration of renewable energy resources. In this paper, we focus on voltage control through reactive power compensation and study how different information structures affect the control performance. In particular, we first show that using only voltage measurements to determine reactive power compensation is insufficient to maintain voltage in the acceptable range. Then we propose two fully decentralized and robust algorithms by adding additional information, which can stabilize the voltage in the acceptable range. The one with higher complexity can further minimize a cost of reactive power compensation in a particular form. Both of the two algorithms use only local measurements and local variables and require no communication. In addition, the two algorithms are robust against heterogeneous update rates and delays. 
\end{abstract}

\section{Introduction}
Voltages in a distribution feeder fluctuate according to the feeder loading condition. The primary purpose of voltage control is to maintain acceptable voltages (plus or minus 5\% around nominal values) at all buses along the distribution feeder under all possible operating conditions. Traditionally the voltage control is achieved by re-configuring transformer taps and capacitors banks (Volt/Var control) \cite{Baran1989a, Baran1989b} based on local measurements (usually voltages) at a slow time scale. This control setting works under normal circumstances, because the change of the loading condition is relatively mild and predictable.

Due to the increasing penetration of distributed energy resources (DER) such as photovoltaic and wind generation in the distribution networks, the operating conditions (supply, demand, voltages, etc) of the distribution feeder fluctuate fast and by a large amount. The conventional voltage control lacks flexibility to respond to those conditions and they may not produce the desired results. This raises important issues on the network security and reliability. To overcome the challenges, new technologies are being proposed and developed, e.g., the inverter design for voltage control. Inverters connect DERs to the grid and adjust the reactive power outputs to stabilize the voltages at a fast time scale \cite{smith2011smart,turitsyn2011options}. The new technologies will enable realtime
distributed voltage control that is needed for the future power grid.

One key element to implement those new technologies is the voltage control algorithms which satisfy certain information constraints yet guarantee the overall system performance.  In general, in the low/medium voltage distribution networks, only a small portion of buses are monitored, individuals are unlikely to announce their generation or load profile, the availability of DERs are fluctuating and uncertain, and even the grid parameters and the topology are only partially known. All of these facts make decentralized algorithms necessary for the voltage control. Each control component adjusts its reactive power input based on the local signals that are easy to measure, calculate, and/or communicate. Such local information dependence facilitates the realtime implementation of the algorithms. In fact, there exist classes of inverter-based local voltage control schemes that only use local voltage measurements \cite{aalborg2012droop,ali2012microgrid}.  For example, voltage droop control in microgrids adjust reactive power injection based on the local voltage deviation from its nominal value. However, it remains as a daunting challenge to guarantee the performance of the control rules, i.e., to  stabilize the voltages within the acceptable range under all possible operating conditions.  

\begin{figure}[ht]
\centering
\includegraphics[width=3.3in]{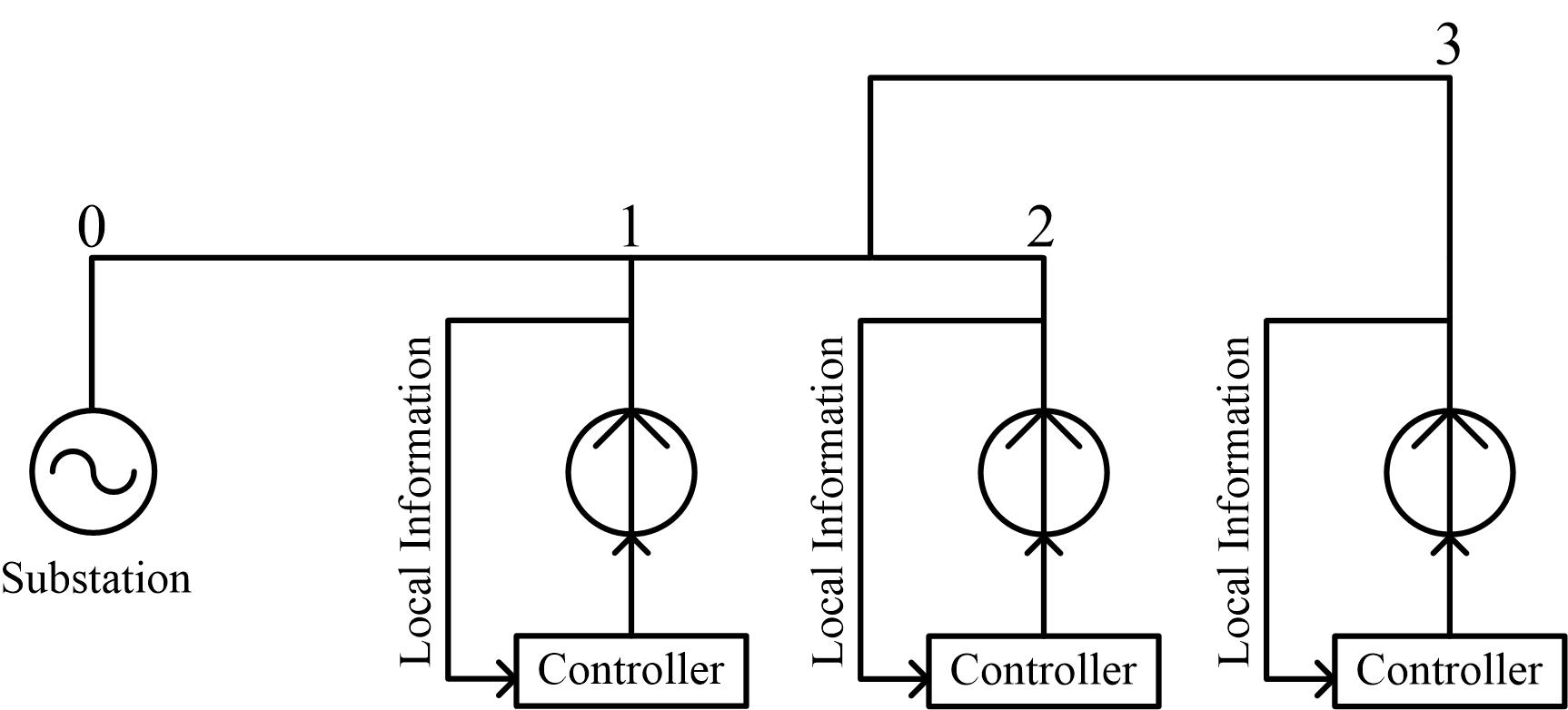}
\caption{An example of a distribution network with 1 substation and 3 buses. Each bus is equipped with a controller that only takes local information as its input.}
\label{fig:substation}
\end{figure}
In this paper, we focus on voltage control through reactive power compensation.\footnote{Thus, the two terms, voltage control and Volt/Var control, are interchangeable in this paper.} To facilitate the design and analysis of voltage control, we use a linear branch
flow model similar to the Simplified DistFlow equations
introduced in \cite{Baran1989c}. The linear branch flow model and the
local Volt/Var control form a closed loop dynamical system
(Equation (\ref{eq:dynamicsystem})). Then we study three types of decentralized voltage control using different local information. The scheme of the decentralized voltage control is shown in Figure ~\ref{fig:substation}. Note that there is no communication in the scheme.
In particular, we first show that using only voltage measurements to determine reactive power compensation (no matter whether in a centralized or decentralized way) is insufficient, because there always exist operating conditions under which such voltage control fails to maintain acceptable voltages. This implies that most of existing voltage control algorithms such as \cite{aalborg2012droop,IEEE1547} that only uses voltage measurements are not able to maintain voltages within the acceptable range under certain circumstances.  Then we propose two decentralized algorithms by adding additional information into the control design.\footnote{For the sake of clarity, we will refer the three algorithms as algorithm 0, 1, 2 correspondingly.} The additional information can be either measured locally or computed locally, meaning that the two algorithms are fully decentralized, requiring no communication. With the aid of the additional information, both of the two algorithms can stabilize voltages in  the acceptable range; and the one with higher complexity can further minimize a cost of the reactive power compensation in a particular form.  Moreover, the two algorithms are robust against heterogeneous update rates and delays. The heterogeneous update rates means that each control device can have its own time clock to choose when to update their reactive power injection and the clock rates can be different. The delays include both the measurement and the actuation delays that widely exist in real systems. The robustness makes the two algorithms suitable for practical applications. Overall, our results imply that with the aid of right local information, local voltages actually carry out the whole network information for Volt/Var control. 

\subsection{Related Work}
Research has been proposed and conducted to improve the existing voltage control to mitigate the voltage fluctuation impact. To name a few, \cite{carvalho2013distributed} studies the active power curtailment to mitigate the voltage rise impact caused by DER; \cite{bolognani2013distributed} studies the distributed VAR control to minimize power losses and stabilize voltages; \cite{farivar2013equilibrium} reverse-engineers the IEEE 1547.8 standard and study the equilibrium and dynamics of voltage control; \cite{robbins2013two} proposes two stage voltage control. Compared with the work in this paper, they usually require certain amount of communication or lack theoretical guarantee of performance regrading stabilizing the voltage within the acceptable range. A recent paper \cite{zhang2013local} proposes a similar algorithm as our algorithm 1. Besides the different approximation model used in \cite{zhang2013local}, the control objective is also different. \footnote{The objective of the voltage control in \cite{zhang2013local} is to drive the voltage profile to a predefined point, whereas the objective in this paper is to stabilize the voltage within a range. Also, because of the different approximation models, the theoretical guarantees of \cite{zhang2013local} and our paper are slightly different. For instance, they show that the convergence depends on the system operating condition, whereas we show that the convergence is regardless of the operating condition. Though we apply the full nonlinear AC model to simulate our algorithms and the simulation results are consistent with the theoretical analysis, it remains as an interesting question about how to choose the power flow approximation model and how good the approximation is.} 
 Another similar paper \cite{kam2014stability} also proposes a linear decentralized control algorithm for voltage control using a similar power flow approximation model to ours. However, their control algorithm only uses voltage measurement, making the algorithm fall into the category of our algorithm 0, which is insufficient if the objective is to stabilize the voltage within the acceptable range. 

In addition to the work mentioned above, in power engineering community, much work has been done in voltage control for microgrids \cite{savaghebi2012secondary,aalborg2012droop,
aalborg2013secondary,rocabert2012microgrid,
vandoorn2012microgrid,hu2011droop,savaghebi2012secondary}. See \cite{ali2012microgrid} for a comprehensive review. The existing methods generally fall under two layers, primary control (droop control) and secondary control.\footnote{There is usually a third layer of microgrid control that focuses on the economical operation of microgrid. This layer is beyond the interest of the discussion here.} The droop control can be viewed as a linear controller in which the reactive power injection is a linear function of the measured voltage. In section \ref{sec:local}, we will show that this kind of methods do not work under some operation conditions. The secondary control is essentially an integral controller that eliminates the deviation between the voltage and a reference point. This method can be viewed as a special case of algorithm 1 in this paper by shrinking the acceptable range to the reference point. 

The remaining of this paper is organized as follows: Section~\ref{sec:problem} presents an AC power flow model, its linear approximation, and the formulation of the Volt/Var control; Section~\ref{sec:prob} illustrates the impossibility result of merely using voltage information in the control; Section~\ref{sec:dec_feasible} presents one decentralized robust algorithm to maintain acceptable voltages; Section~\ref{sec:dec_opt} presents one decentralized robust algorithm to maintain acceptable voltages and also reach certain optimality as to the reactive power support; Section~\ref{sec:case} simulates the algorithms to complement our analysis; Sectoin~\ref{sec:conclusion} concludes the paper.


\section{Preliminaries: Power Flow Model and Problem Formulation}\label{sec:problem}

Due to space limit, we introduce here an abridged version of the branch
flow model; see, e.g., \cite{Farivar-2012-BFM-TPS,Gan2012branch}
for more details.

\subsection{Branch flow model for radial networks\label{sub:branchflow}}

Consider a radial distribution circuit that consists of a set $N$
of buses and a set $E$ of distribution lines connecting these buses.
We index the buses in $N$ by $i=0,1,\dots,n$, and denote a line
in $E$ by the pair $(i,j)$ of buses it connects. Bus $0$ represents
the substation and other buses in $N$ represent branch buses. For
each line $(i,j)\in E$, let $I_{i,j}$ be the complex current flowing
from buses $i$ to $j$, $z_{ij}=r_{ij}+\textbf{i}x_{i,j}$
be the impedance on line $(i,j)$, and $S_{ij}=P_{ij}+\textbf{i}Q_{i,j}$ be
the complex power flowing from buses $i$ to bus $j$. On each bus
$i\in N$, let $V_{i}$ be the complex voltage and $s_{i}=p_i + \textbf{i} q_i$ be the
complex power injection, i.e., the generation minus consumption. As customary,
we assume that the complex voltage $V_{0}$ on the substation bus
is given and fixed at the nominal value.

The branch flow model was first proposed in \cite{Baran1989a,Baran1989b}
to model power flows in a steady state in a radial distribution circuit: 

\begin{subeqnarray}
\label{eq:branchflow_AC}
-p_j & = &  P_{ij} - r_{ij} \ell_{ij} - \sum_{k: (j,k)\in E} P_{jk},
~j=1,\ldots,n
\label{eq:Kirchhoff.2a}
\\
-q_j & = &  Q_{ij} - x_{ij} \ell_{ij}  - \sum_{k: (j,k)\in E} Q_{jk},
~j=1,\ldots,n
\label{eq:Kirchhoff.2b}
\\
v_j & = & v_i - 2 (r_{ij} P_{ij} + x_{ij} Q_{ij}) + (r_{ij}^2 + x_{ij}^2) \ell_{ij},
\nonumber
\\
& & \ \ \ \ \ \ \ \ \  \ \ \ \ \ \ \ \ \ \ \ \ \ \ \ \ 
(i, j)\in E
\label{eq:Kirchhoff.2c}
\\
\ell_{ij} & = &  \frac{P_{ij}^2 + Q_{ij}^2}{v_i},
\ \ \ \ \ \ \ \ \ \ \  \ \ (i, j)\in E,
\label{eq:Kirchhoff.2d}
\end{subeqnarray}
where $\ell:=|I_{ij}|^{2}$, $v_{i}:=|V_{i}|^{2}$. Equations
(\ref{eq:branchflow_AC}) 
define a system of equations in the variables $(P,Q,\ell,v):=(P_{ij},Q_{ij},\ell_{ij},(i,j)\in E,i=1,\dots,n)$,
which do not include phase angles of voltages and currents. Given
an $(P,Q,\ell,v)$ these phase angles can be uniquely determined for
radial networks. This is not the case for mesh networks; see \cite{Farivar-2012-BFM-TPS}
for exact conditions under which phase angles can be recovered for mesh networks.

\subsection{Linear approximation of the branch flow model}
Real distribution circuits usually have very small $r,x$, i.e. $r,x<<1$,
while $v\sim 1$. Thus real and reactive power losses are typically much smaller than power flows $P_{ij},Q_{ij}$. Following\cite{Baran1989c}, we neglect the higher order real and reactive power loss terms in (\ref{eq:branchflow_AC}) by setting $\ell_{ij}=0$ and approximate $P,Q,v$ using the following linear approximation, known as Simplified Distflow introduced in \cite{Baran1989c}. 
\begin{subeqnarray}
\label{eq:branchflow_linear}
-p_j & = &  P_{ij} - \sum_{k: (j,k)\in E} P_{jk},
~j=1,\ldots,n
\\
-q_j & = &  Q_{ij} - \sum_{k: (j,k)\in E} Q_{jk},
~j=1,\ldots,n
\\
v_j & = & v_i - 2 (r_{ij} P_{ij} + x_{ij} Q_{ij}), 
(i, j)\in E
\end{subeqnarray} 

From (\ref{eq:branchflow_linear}), we can derive that the voltage $v=(v_1,\ldots,v_n)^T$ and power injection $p=(p_1,\ldots,p_n),q=(q_1,\ldots,q_n)$ satisfy the following equation: 
\begin{equation}
v=Rp+Xq+v_{0}
\label{eq:v_pq}
\end{equation}
where $R=\left[R_{ij}\right]_{n\times n}$, $X=\left[X_{ij}\right]_{n\times n}$ are given as follows: 
\begin{eqnarray*}
R_{ij}:=2\sum_{(h,k)\in \mathcal{P}_i \cap \mathcal{P}_j} r_{hk}, \\
X_{ij}:=2\sum_{(h,k)\in \mathcal{P}_i \cap \mathcal{P}_j} x_{hk}.  \\
\end{eqnarray*}
Here $\mathcal{P}_i \subset E$ is the set of lines on the unique path from bus $0$ to bus $i$. The detailed derivation is given in \cite{lijun2013voltage}. Since $r_{ij}>0, x_{ij}>0$ for all $i,j$, $R, X$ have the following properties.

\begin{lemma}
R, X are positive definite and positive matrices.\footnote{A matrix is a positive matrix iff each item is positive.}
\end{lemma}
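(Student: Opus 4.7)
The plan is to recast $R$ and $X$ as congruence transformations of diagonal matrices. Introduce the $n\times n$ path--incidence matrix $M\in\{0,1\}^{n\times n}$ whose rows are indexed by the non-root buses $1,\dots,n$, whose columns are indexed by the $n$ tree edges, and whose entries are
\[
M_{i,e} \;=\; \begin{cases} 1 & e\in\mathcal{P}_i,\\ 0 & \text{otherwise.}\end{cases}
\]
Writing $D_r=\diag(r_e)$ and $D_x=\diag(x_e)$, a direct expansion of the defining sum gives
\[
R \;=\; 2\, M D_r M^\top, \qquad X \;=\; 2\, M D_x M^\top,
\]
since $\sum_{e\in\mathcal{P}_i\cap\mathcal{P}_j} r_e = \sum_e r_e M_{i,e}M_{j,e}$. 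Both $D_r$ and $D_x$ are positive definite because every $r_e, x_e>0$.

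Next I would establish that $M$ is nonsingular by exhibiting an ordering that makes it unit lower-triangular. Order the non-root buses $1,\dots,n$ so that every ancestor precedes all its descendants (any DFS/BFS ordering rooted at bus $0$ suffices), and label the edges by their child endpoint, i.e.\ the $k$-th edge is the unique edge joining bus $k$ to its parent. Under this ordering $M_{i,k}=1$ iff bus $k$ is an ancestor of $i$ or $k=i$, so $M_{i,k}=0$ whenever $k>i$ while $M_{i,i}=1$. Thus $M$ is lower triangular with unit diagonal and $\det M = 1$. Positive definiteness of $R$ and $X$ then follows from $R = 2 M D_r M^\top$ and $X = 2 M D_x M^\top$ being congruence images of positive definite diagonal matrices via a nonsingular $M$: for any nonzero $y$, $y^\top R y = 2 (M^\top y)^\top D_r (M^\top y) > 0$, and likewise for $X$.

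For entrywise positivity, note that $R_{ii} = 2\sum_{e\in\mathcal{P}_i} r_e > 0$, and for $i\neq j$ the intersection $\mathcal{P}_i\cap\mathcal{P}_j$ is exactly the set of edges on the path from bus $0$ to the least common ancestor of $i$ and $j$. Under the standard distribution--feeder convention (implicit in the paper's setting of a single substation with a common feeder head), every $\mathcal{P}_i$ contains at least the initial feeder edge out of bus $0$, so the intersection is nonempty and $R_{ij} \ge 2 r_{\text{head}} > 0$. The identical argument applied with $x_e$ gives the same conclusion for $X$.

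The main obstacle, and the only subtle point, is the entrywise positivity: the linear-algebraic positive definiteness part is clean once the factorization $R=2MD_r M^\top$ and the triangular ordering of $M$ are in hand, whereas entrywise positivity quietly depends on the topological assumption that no two paths from the substation are edge-disjoint.
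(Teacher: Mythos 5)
Your proof is correct, but note that the paper itself does not prove this lemma at all---it simply defers to the reference \cite{lijun2013voltage}---so you have supplied a self-contained argument where the paper gives only a citation. Your route is the standard one for this fact: the factorization $R = 2MD_rM^{\top}$, $X = 2MD_xM^{\top}$ with $M$ the path--edge incidence matrix, the observation that a topological (ancestor-first) ordering with edges labeled by their child endpoint makes $M$ unit lower triangular and hence nonsingular, and positive definiteness by congruence with a positive diagonal matrix. All of these steps check out. You also correctly isolate the one genuinely delicate point: entrywise positivity of $R$ and $X$ requires that $\mathcal{P}_i \cap \mathcal{P}_j$ be nonempty for every pair $i \neq j$, which holds if and only if the substation bus $0$ has a single outgoing feeder line (otherwise two buses in different subtrees hanging off bus $0$ would give $R_{ij}=0$, and the matrices would be merely nonnegative off the diagonal). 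The paper never states this topological hypothesis explicitly, but it is implicit in the single-feeder setting and in the cited derivation, and your proof is the more honest for making it visible. Note that positive definiteness alone would suffice for Theorems~\ref{thm:asyn_1}--\ref{thm:asyn}, so the unproved entrywise-positivity claim is harmless to the rest of the paper even where the single-feeder assumption might fail.
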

\begin{proof}
We refer readers to \cite{lijun2013voltage} for the detailed proof.
\end{proof}

\subsection{Problem formulation}\label{sec:prob}
Before rigorously formulating the Volt/Var control problem, we separate $q$ into two parts, $q^c=(q_1^c,\ldots,q_n^c)$ and $q^{e}=(q_1^e,\ldots,q_n^e)$, where $q^c$ denotes the reactive power injection governed by the Volt/Var control components and $q^e$ denotes any other reactive power injection.\footnote{For easy exposition, we assume that there is a $q^c_i$ at each bus $i$. But the algorithm extends to the scenario that only a subset of buses have Volt/Var control component.} Let $v^{par}\triangleq Rp+Xq^e+v_0$, then,
$$v= Xq^c+v^{par}.$$

The goal of Volt/Var control on a distribution network is to provision reactive power injections $q^c$ to maintain the bus voltages $v$ within a tight range $[\underline {v}, \bar{v}]$ under any operating condition given by $v^{par}$. Without causing any confusion, in the rest of the paper, we will simply use $q_i$  instead of $q_i^c$ to denote the reactive power pulled by the Volt/Var control devices. The Volt/Var control can be modeled as a control problem on a quasi-dynamical system with state $v$ and controller $q$; that is, given the current state $v(t)$ and other available information, the controller determines a new reactive power injections $q(t)$ and the new $q(t)$ results in a new voltage profile $v(t+1)$ according to (\ref{eq:v_pq}). 
Mathematically, the Volt/Var control problem is formulated as the following closed loop dynamical system,
\begin{subeqnarray} \label{eq:dynamicsystem}
v(t+1) & = & Xq(t)+v^{par}; \slabel{eq:prob_v}\\
q(t) & = & u(\text{information at time } t). \slabel{eq:prob_q}
\end{subeqnarray}
where $u=(u_1, \ldots,u_n)$ is the Volt/Var controller. The objective of Volt/Var control is to design $u$ to drive the system voltage $v(t)$ to  the acceptable range $ [\underline{v},\bar{v}]$ under any system operating condition which is given by $v^{par}$. Rigorously speaking, it requires that 
$$\lim_{t\rightarrow \infty}\text{dist}(v(t),[\underline{v},\bar{v}])=0.$$
Here $\text{dist}(y,Z):=\min_{z\in Z}||y-z||$ where $y$ is a point and $Z$ is a set. Note that (\ref{eq:prob_v}) is governed by the system intrinsic dynamics (Kirchoff's Law) and not able to controlled or tuned, which makes the Volt/Var control  challenging.

The problem we address in this paper is to design voltage control algorithm $u$ where each $u_i$ only uses local information, as shown in Figure~\ref{fig:substation}. In general, in electricity distribution networks, only a small portion of buses are monitored, individuals are unlikely to announce their generation or load profile, the availability of DERs are fluctuating and uncertain, and even the grid parameters and the topology are only partially known. All of these facts make decentralized algorithms necessary for the voltage control, i.e., each control component adjusts its reactive power input based on the local signals that are easy to measure or to communicate.

\section{Voltage control using only voltage measurements: Impossibility result}\label{sec:local}
 
We first study such Volt/Var control rules that use merely voltage measurements as the control information. This type of control has been proposed and discussed in many existing literature and applications 
\cite{ali2012microgrid}. For example, the IEEE 1547.8 standard \cite{IEEE1547} proposes decentralized voltage control for inverters using the deviation of the local voltages from the nominal value: an inverter monitors its terminal voltage and sets its reactive power generation based on a static and predefined Volt/Var curve. If the predefined Volt/Var curve is linear, the controller is called a droop controller. Regarding a detailed theoretic analysis of droop controllers, please see \cite{kam2014stability}. The scheme of such controllers is shown in Figure \ref{fig:v_dec}. Besides the IEEE 1547.8 standard, there are other researches promoting adapting reactive power injection according to the voltage \cite{carvalho2013distributed,kam2014stability,aalborg2012droop}. However, in the following, we show that this type of controller is insufficient to drive the voltage to the acceptable range even if controller stabilizes the system.
\begin{figure}[ht]
\centering
\includegraphics[width=2in]{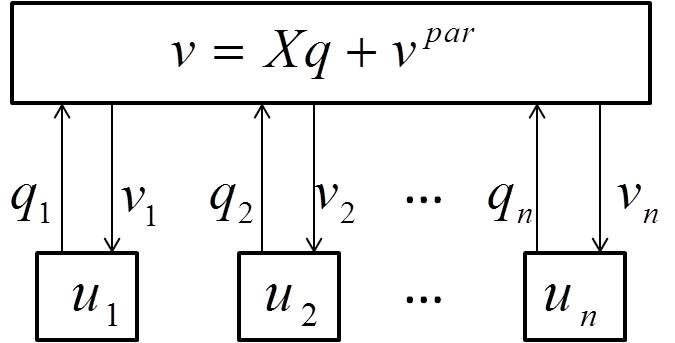}
\caption{IEEE 1547.8 standard: Decentralized Volt/Var control using local voltage measurements.}\label{fig:v_dec}
\end{figure}

\begin{figure}[ht]
\centering
\includegraphics[width=2in]{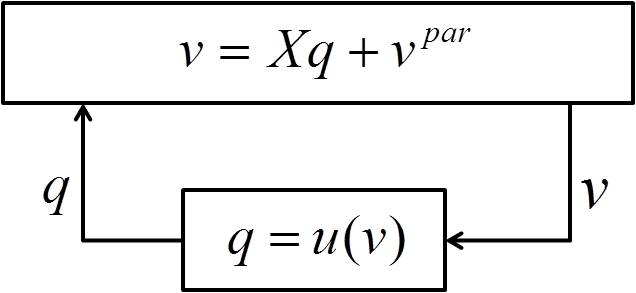}
\caption{Volt/Var control using merely voltage information: This type of controller is insufficient for Volt/Var control.}\label{fig:v_gen}
\end{figure}

In fact, we will demonstrate that as long as $u$ is in the following form,
\begin{equation}
q(t)=u(v(t)), \label{eq:puv}
\end{equation}
and $u$ maps the bounded set $[\underline{v},\bar{v}]$ to a bounded set, then it is impossible for such $u$ to maintain acceptable voltages under all the operating condition, no matter whether $u$ is in a centralized or decentralized form. This is formally stated in the following proposition. 
\begin{proposition}\label{prop:volt}
For any $u$ in the form of (\ref{eq:puv}) that maps $[\underline{v},\bar{v}]$ to a bounded set, there exist $v^{par}$ such that this controller is not able to stabilize the voltage $v$ in the acceptable range $[\underline{v},\bar{v}]$.\footnote{Note that any continuous function maps $[\underline{v},\bar{v}]$ into a bounded set.} 
\end{proposition}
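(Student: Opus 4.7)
The plan is a contradiction argument pinned on the freedom in the parameter $v^{par}$. Since $u$ maps the compact interval $[\underline{v},\bar{v}]$ into a bounded subset of $\mathbb{R}^n$, there is a finite constant $M$ with $|u_i(v)|\le M$ for every $i$ and every $v\in[\underline{v},\bar{v}]$. Combined with the preceding lemma that $X$ is a (strictly) positive matrix, setting $K:=\max_i\sum_j X_{ij}>0$ gives the componentwise lower bound $(Xu(v))_i \ge -MK$ for all $v\in[\underline{v},\bar{v}]$ and all $i$.

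First I would pick an adversarial operating condition, namely any $v^{par}$ with $v^{par}_i > \bar{v}_i + MK$ for every $i$. For this choice, the closed-loop update (\ref{eq:prob_v}) gives
\begin{equation*}
v(t+1)_i \;=\; (Xu(v(t)))_i + v^{par}_i \;\ge\; -MK + v^{par}_i \;>\; \bar{v}_i
\end{equation*}
whenever $v(t)\in[\underline{v},\bar{v}]$. Hence the trajectory can never remain inside the acceptable range on two consecutive time steps: any time it enters the range, it is kicked strictly above $\bar{v}$ at the next instant.

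To obtain the stronger conclusion $\text{dist}(v(t),[\underline{v},\bar{v}])\not\to 0$, I would extend the controller's image bound to a closed $\epsilon$-neighborhood $N_\epsilon$ of $[\underline{v},\bar{v}]$. For the continuous controllers singled out in the footnote, compactness of $N_\epsilon$ yields a constant $M'$ with $|u_i(v)|\le M'$ on $N_\epsilon$. Choosing $v^{par}_i > \bar{v}_i + \epsilon + M'K$ then forces $v(t+1)_i > \bar{v}_i + \epsilon$ for every $v(t)\in N_\epsilon$, so no tail of the trajectory can lie inside $N_\epsilon$, which is the negation of $\text{dist}(v(t),[\underline{v},\bar{v}])\to 0$.

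The only step I expect to require real care is this promotion of boundedness from $[\underline{v},\bar{v}]$ itself to a neighborhood of it; under continuity it is immediate from compactness, and without any regularity on $u$ one still obtains the clean impossibility that $v(t)$ cannot reside in $[\underline{v},\bar{v}]$ at two successive times. The other ingredients—positivity of every $X_{ij}$ and the complete freedom to make $v^{par}$ arbitrarily large componentwise—are already on record from the earlier sections, so the overall proof should be quite short.
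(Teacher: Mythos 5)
Your proposal is correct, and it reaches the conclusion by a more constructive route than the paper. The paper's own proof is a one-line non-constructive argument: writing $v^{par}=v(t+1)-Xu(v(t))$ from (\ref{eq:prob_v}) and (\ref{eq:puv}), it observes that any $v^{par}$ admitting stabilization must lie in the bounded set $\{v-Xu(\tilde v):v,\tilde v\in[\underline v,\bar v]\}$, so any $v^{par}$ outside that set witnesses the impossibility; it never exhibits a trajectory and never uses positivity of $X$. You instead exhibit an explicit adversarial $v^{par}$ (componentwise larger than $\bar v + MK$) and follow the closed-loop dynamics to show the state is kicked above $\bar v$ one step after entering the range. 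What your version buys is twofold: it is quantitative (it tells you exactly how large $v^{par}$ must be), and it confronts a subtlety the paper's proof glosses over --- the definition of stabilization only requires $\mathrm{dist}(v(t),[\underline v,\bar v])\to 0$, not that $v(t)$ ever lies in the range, so boundedness of $u$ on $[\underline v,\bar v]$ alone does not immediately constrain $v^{par}=v(t+1)-Xu(v(t))$ along a merely convergent trajectory. Your neighborhood argument fixes this cleanly for continuous $u$ (which is the intended class, per the footnote), at the cost of needing boundedness of $u$ on a compact $\epsilon$-enlargement rather than on the range itself; the paper's proof implicitly makes the same strengthening without comment. One cosmetic point: you do not actually need $X$ to be a positive matrix, only that $\|Xu(v)\|$ is bounded on the relevant set, which would let you drop the constant $K$ and align more closely with the paper's minimal hypotheses.
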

\begin{proof}
The proof is straghtforward. Subsistuting (\ref{eq:puv}) into (\ref{eq:prob_v}), we have:
$$
v(t+1)-Xu(v(t))=v^{par}.
$$
Given a $v^{par}$, if $u$ stabilizes the voltage in the acceptable range $[\underline{v},\bar{v}]$, i.e., $\lim_{t\rightarrow \infty}\text{dist}(v(t),[\underline{v},\bar{v}])=0$, then $v^{par}$ should be at least in the set of $M:=\{v-Xu(\tilde{v}):v,\tilde{v}\in [\underline{v},\bar{v}]\}$ which is bounded because $u$ maps $[\underline{v},\bar{v}]$ to a bounded set.  Thus we know there exist $v^{par}$ that the controller is not able to stabilize the voltage in the acceptable range. 
\end{proof}

This proposition tells us that almost any Volt/Var control  that depends merely on voltage information is not suitable to maintain acceptable voltages, no matter whether it is decentralized or centralized. Thus we should consider adding (or using) other information to design controller $u$. In the rest of the paper, we will show that if we use both the information of the current $q$ and $v$, then a fully decentralized algorithm in the form of $u_i(v_i(t), q_i(t-1))$ is able to maintain acceptable voltages; further, if we introduce some auxiliary variables $\lambda_i, \forall i $, then a fully decentralized algorithm in the form of $u_i\left(v_i(t), \lambda_i(t-1)\right)$ is able to both maintain acceptable voltages and minimize a cost of reactive power compensation in a particular form.


\section{A decentralized algorithm to reach the acceptable volage range}\label{sec:dec_feasible}
In this section, we will show that by using both the local voltage and reactive power information, a fully decentralized algorithm in the form of $q_i(t)=u(v_i(t),q_i(t-1))$ is able to stabilize the voltage within the acceptable range. The scheme of the algorithm is shown in Figure~\ref{fig:qv_dec}. Moreover, this algorithm is actually robust to heterogeneous update rates and delays. Firstly, each control device is free to choose when to update the reactive power injection; secondly, it is allowed to have measurement and actuation delay.  This robustness makes the algorithm suitable for practical applications because in practice different buses may have different control devices which operate at different time rates and delay widely exists in real systems.

We now present the first algorithm which we name as algorithm 1 and then prove its convergence.
\begin{figure}[ht]
\centering
\includegraphics[width=2in]{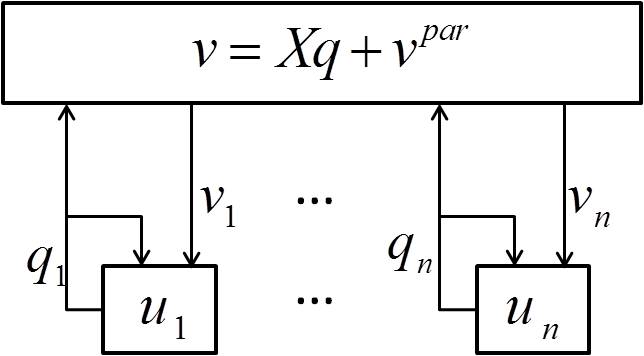}
\caption{Volt/Var control using local information of local voltage and reactive power injection: This type of controller is able to maintain acceptable voltages.}\label{fig:qv_dec}
\end{figure}

\subsection{Algorithm 1}
For each bus $i$, we introduce a infinite time step set $T_i \subseteq \{1,2,\ldots\}$ to represent the time steps when bus $i$ updates the reactive power rejection. 

For $t \notin T_i$, no new decision will be made, which means the reactive injection stays the same as the previous time step, i.e.
\begin{align}
\label{eq:asyn_noupdate_1}
q_i(t)&= q_i(t-1)
\end{align}

For $t \in T_i$, bus $i$ updates according to the following update rule,
\begin{align}
\label{eq:asyn_update_1}
q_i(t)&= q_i(t-1) - \epsilon d_i(v_i(t - \tau_i(t)))
\end{align}
where given a voltage $v_i$, 
\begin{eqnarray}
d_i(v_i):=\left[v_i-\bar{v}_{i}\right]^+-\left[\underline{v}_{i}-v_i\right]^+
\end{eqnarray} and $\epsilon$ is a positive constant stepsize. Here $[]^+$ is defined as $[a]^+:=\max(a,0)$. $\tau_i(t)\geq 0 $ is the measurement delay of bus $i$ at $t$. $v_i(t - \tau_i(t))$ represents the voltage measurement bus $i$ received from a voltage sensor at time $t$, which is the actual voltage $\tau_i(t)$ steps before $t$.\footnote{When bus $i$ is not updating at $t$, we would abuse the notation $\tau_i(t)$ to be an arbitrary nonnegative integer despite it will have no effect on the algorithm.} 


The algorithm in (\ref{eq:asyn_update_1}) says that if the voltage at bus $i$ exceeds its upper limit, bus $i$ decreases the reactive power injection; in contrast, if the local voltage falls below its lower limit, bus $i$ increases its reactive power injection. This algorithm is very simple and intuitive, yet we will prove that regardless of $v^{par}$ and despite the heterogeneous update rates and the delays, the voltage will asymptotically converge to the acceptable range $[\underline{v},\bar{v}]$ under the controller specified in (\ref{eq:asyn_noupdate_1}) and (\ref{eq:asyn_update_1}). Before that, we discuss the properties of this algorithm which make it attractive to real-time and scalable implementation. 
\begin{enumerate}

\item
We note that in the algorithm, each bus $i$ uses only the local voltage measurement $v_i(t - \tau_i(t))$ and its previous reactive power injection $q_i(t-1)$. This algorithm is fully decentralized, requiring no communication. 
\item The controller (\ref{eq:asyn_update_1}) is similar to an integral controller. The implementation is simple. 
\item The algorithm does not require any information about the system operating condition, represented by$v^{par}$. This makes the algorithm practical because due to the large volatility and uncertainty of renewable energy, time-varying nature of demand, and the privacy concern of consumers, $v^{par}$ is not available and fluctuates fast and by a large amount.  
\item Though the convergence of the algorithm depends on the value of the step size $\epsilon$, as shown in the next section, the criterion for choosing $\epsilon$ to guarantee the convergence is independent of $v^{par}$. As a result, once we have incorporated the algorithm into the hardware design of Volt/Var control, the Volt/Var control will work under any system operating operation.
\end{enumerate}
Before we proceed to the convergence result, we make the following assumption on the update time steps $T_i$ and the measurement delays $\tau_i(t)$.
\begin{assumption}\label{assump:1}
For any $i$, the difference between consecutive update time steps in $T_i$ is upper bounded by $T_a$. For any $i$ and $t$, the time delay $\tau_i(t)$ is upper bounded by $T_d$. 
\end{assumption}
The first part of the assumption means that each bus should update at least once within $T_a$ steps; the second part means that the delays are uppder bounded by $T_d$. This assumption is mild and reasonable, and can be easily guaranteed in practice. Under this assumption, we have the following convergence result.

\begin{theorem} \label{thm:asyn_1}
For Algorithm 1 in (\ref{eq:asyn_noupdate_1}) and (\ref{eq:asyn_update_1}), if Assumption \ref{assump:1} holds and $\epsilon < \frac{2}{\sigma_{max}(X) +2\Vert X\Vert_F T_d}$, then $d(v(t))$ will converge to $0$, and $v(t)$ will converge to a point $v^*\in [\underline{v},\bar{v}]$. 
\end{theorem}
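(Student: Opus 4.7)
The plan is to identify a Lyapunov function that contracts along the trajectory up to a controllable delay/asynchrony error, and then use summability to force $d(v(t))\to 0$. Define $\phi(v) := \frac{1}{2}\|[v-\bar v]^+\|^2 + \frac{1}{2}\|[\underline v-v]^+\|^2$, which is nonnegative, vanishes exactly on $[\underline v,\bar v]$, and is $C^1$ with $\nabla\phi(v)=d(v)$; since each $d_i$ is $1$-Lipschitz in $v_i$, $\phi$ is $1$-smooth. Rewrite the update uniformly by defining $g(t)\in\mathbb{R}^n$ with $g_i(t):=d_i(v_i(t-\tau_i(t)))$ when $t\in T_i$ and $g_i(t):=0$ otherwise; then (\ref{eq:asyn_noupdate_1})--(\ref{eq:asyn_update_1}) collapse to $q(t)-q(t-1)=-\epsilon g(t)$, so $v(t+1)-v(t)=-\epsilon X g(t)$.

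Next, I would apply the standard descent inequality for $1$-smooth functions to obtain $\phi(v(t+1))-\phi(v(t))\le -\epsilon\langle d(v(t)),X g(t)\rangle + \frac{\epsilon^2}{2}\|X g(t)\|^2$. Splitting $d(v(t)) = g(t) + (d(v(t))-g(t))$, the term $\epsilon\, g(t)^\top X g(t)$ provides genuine descent (using $X \succ 0$), while the cross-term $\langle d(v(t))-g(t), Xg(t)\rangle$ is an error to bound. Using the $1$-Lipschitzness of each $d_i$ together with the path-integral identity $v_i(t)-v_i(t-\tau_i(t)) = -\epsilon\sum_{s=t-\tau_i(t)}^{t-1}(X g(s))_i$, and absorbing the missed-update discrepancy for $t\notin T_i$ in the same way over a window of length at most $T_a$, one obtains $|d_i(v_i(t))-g_i(t)| \le \epsilon \sum_{s}\|X g(s)\|$ with the sum taken over a bounded past window. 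Cauchy--Schwarz then bounds the cross term by a weighted sum of past $\|X g(s)\|^2$ with a prefactor involving $\|X\|_F$ and $T_d$.

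To close the loop, introduce an augmented Lyapunov function of the form $W(t):=\phi(v(t))+\kappa \sum_{s=t-T_d}^{t-1}(s-(t-T_d)+1)\|X g(s)\|^2$ for a suitably chosen $\kappa>0$; the weighted past-sum telescopes so that the cross-term bound is exactly absorbed, and a direct computation gives $W(t+1)-W(t)\le -\alpha\,\|X g(t)\|^2$ for some $\alpha>0$ whenever $\epsilon < 2/(\sigma_{\max}(X)+2\|X\|_F T_d)$. Nonnegativity of $W$ and telescoping yield $\sum_t \|X g(t)\|^2 < \infty$, hence $X g(t)\to 0$, and since $X\succ 0$ also $g(t)\to 0$. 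Combined with Assumption \ref{assump:1} (each bus updates at least every $T_a$ steps and delays are bounded by $T_d$) and the Lipschitzness of $d$, this upgrades to $d(v(t))\to 0$, i.e., $\mathrm{dist}(v(t),[\underline v,\bar v])\to 0$.

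Finally, to obtain convergence of $v(t)$ to a specific $v^\ast\in[\underline v,\bar v]$, I would argue that $q(t)$ converges and then invoke $v(t)=X q(t-1)+v^{par}$. Boundedness of $q(t)$ follows from boundedness of $\phi(v(t))$ and invertibility of $X$, while a monotonicity argument exploiting the sign structure of $g$ (nonnegative when $v_i$ sits above $\bar v_i$, nonpositive when below $\underline v_i$) shows each $q_i(t)$ is eventually monotone and hence convergent, giving $q(t)\to q^\ast$ and $v(t)\to v^\ast := X q^\ast + v^{par}\in[\underline v,\bar v]$. I expect the main obstacle to be constructing the augmented Lyapunov function with precisely the right weights so that the delay/asynchrony errors telescope cleanly into the descent and reproduce the exact step-size bound $2/(\sigma_{\max}(X)+2\|X\|_F T_d)$; the pointwise-convergence step is also delicate when the trajectory hugs the boundary of the acceptable box, where the monotonicity-of-$q$ argument requires careful case analysis.
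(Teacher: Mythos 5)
Your overall strategy (a smooth Lyapunov descent plus an augmented window term to absorb delay errors) is reasonable, but two steps fail, and they are exactly the places where the paper's choice of Lyapunov function does real work. First, your key error bound $|d_i(v_i(t))-g_i(t)|\le\epsilon\sum_s\Vert Xg(s)\Vert$ is false for a bus that does not update at time $t$: there $g_i(t)=0$ while $d_i(v_i(t))$ is the full deadzone value, which is not a path-increment quantity and need not be small. Consequently the cross term $-\epsilon\langle d(v(t))-g(t),Xg(t)\rangle$ contains contributions $-\epsilon\, d_i(v_i(t))(Xg(t))_i$ over non-updating buses (and $X$ is a dense positive matrix, so $(Xg(t))_i\neq 0$ even when bus $i$ is idle). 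This error is first order in $\epsilon$ and first order in $\Vert g(t)\Vert$, the same order as your descent term $-\epsilon\, g(t)^TXg(t)$, so it cannot be absorbed by shrinking $\epsilon$, and your augmented function $W$ only absorbs terms quadratic in past increments. Second, even in the synchronous delay-free case your $\phi$ gives $\phi(v(t+1))-\phi(v(t))\le-\epsilon d^TXd+\tfrac{\epsilon^2}{2}\Vert Xd\Vert^2$, whose negativity requires $\epsilon<2\sigma_{\min}(X)/\sigma_{\max}(X)^2$; this cannot reproduce the stated threshold $\tfrac{2}{\sigma_{\max}(X)+2\Vert X\Vert_F T_d}$. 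The paper avoids both problems by working with $\psi(q)=\tfrac12\hat d(v(q))^TX^{-1}\hat d(v(q))$, where $\hat d(v)=v-\tfrac{\bar v+\underline v}{2}$ has no deadzone: $\psi$ is exactly quadratic in $q$ with Hessian $X$, its first-order term pairs the gradient with $\alpha(t)=q(t)-q(t-1)$, which vanishes identically on non-updating buses, and the remaining mismatch $\hat d_i(v_i(t-\tau_i(t)))-\hat d_i(v_i(t))$ is a pure delay error bounded by past increments for every bus. The componentwise identity $\hat d_i\, d_i=d_i^2+M_i|d_i|$ with $M_i=(\bar v_i-\underline v_i)/2$ then yields $\gamma(t)^T\alpha(t)=-\tfrac1\epsilon\Vert\alpha(t)\Vert^2-\sum_iM_i|\alpha_i(t)|$ and hence the stated stepsize condition.

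The last step of your argument also has a gap: eventual monotonicity of each $q_i(t)$ does not follow from the sign structure of $g$, since the coupled voltages can cross $\bar v_i$ and $\underline v_i$ infinitely often. The paper instead extracts convergence of $q_i(t)$ from the extra term $-\sum_t M_i|\alpha_i(t)|$ in the telescoped Lyapunov bound, which gives absolute summability of the increments (hence a Cauchy sequence) whenever $\bar v_i>\underline v_i$, and treats the degenerate buses with $\bar v_i=\underline v_i$ by a separate linear-algebra argument showing that the map from $q$ to the mixed vector of converging coordinates is invertible. You would need some substitute for this absolute-summability mechanism, which your $\phi$ does not produce.
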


From Theorem \ref{thm:asyn_1} we can see that the maximum delay time $T_d$ contributes a linear term in the denominator of the maximum stepsize that guarantees the convergence. The slope of the linear term is the Frobenius norm of $X$, $\Vert X\Vert_F$. The theorem shows that the larger $\Vert X\Vert_F$ is (which is usually a result of a larger network) or the larger $T_d$ is, the smaller the stepsize should be. 
\subsection{Proof of the convergence}
In this section, we prove that the voltage asymptotically converges to the acceptable range $[\underline{v},\bar{v}]$ under the controller specified in (\ref{eq:asyn_noupdate_1}) and (\ref{eq:asyn_update_1}). We first introduce a Lyapunov function $\psi$, which will decrease along the trajectory of the control algorithm. Then, using a similar technique as in \cite{low1999optimization}, we will show that $\Vert q(t) - q(t-1)\Vert \rightarrow 0$. Combining this with the fact that each bus has to update at least once within $T_a$ steps, we will reach the conclusion that $d(v(t))$ asymptotically converges to $0$.

We first introduce some notations. Let $\alpha(t) = q(t) - q(t-1)$. Define a $n$-vector $\hat{d}(v) = v - \frac{\bar{v}+\underline{v}}{2}$, i.e. its $i$th component is $\hat{d}_i(v_i) = v_i - \frac{\bar{v}_i+\underline{v}_i}{2}$. Define a Lyapunov function $\psi(q) = \frac{1}{2}\hat{d}(v(q))^T X^{-1}\hat{d}(v(q))$ where $v(q) =Xq+v^{par}$. Clearly, $\nabla \psi(q) = \hat{d}(v(q))$ and $\nabla^2 \psi(q) = X$. Define $\gamma(t)$ to be a $n$-vector with its $i$th component as $\gamma_i(t) = \hat{d}_i(v_i(t-\tau_i(t) ))$.

\begin{lemma} \label{lem:gamma_psi}
$\Vert \gamma(t) - \nabla \psi(q(t-1))\Vert \leq \Vert X\Vert_F \sum_{t' = t-T_d}^{t-1} \Vert \alpha(t')\Vert$
\end{lemma}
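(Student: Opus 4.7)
The plan is to turn the difference $\gamma(t) - \nabla\psi(q(t-1))$ into an explicit sum of at most $T_d$ ``one‑step voltage changes,'' each of which is easy to bound using the linear dynamics $v(t)=Xq(t-1)+v^{par}$, and then to finish with the triangle inequality and the bound $\Vert X y\Vert\le\Vert X\Vert_F\Vert y\Vert$.

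First I would unpack the two vectors componentwise. By definition $\nabla\psi(q)=\hat d(v(q))$, so $\nabla\psi(q(t-1))_i = \hat d_i(v_i(t))$. Since $\gamma_i(t)=\hat d_i(v_i(t-\tau_i(t)))$ and $\hat d_i$ is simply a shift by the constant $\tfrac{\bar v_i+\underline v_i}{2}$, the constants cancel and
\begin{equation*}
\gamma_i(t) - \nabla\psi(q(t-1))_i \;=\; v_i(t-\tau_i(t)) - v_i(t).
\end{equation*}
Next, using $v(t)=Xq(t-1)+v^{par}$ (and the same identity $\tau_i(t)$ steps earlier) together with the telescoping $q(t-1)-q(t-\tau_i(t)-1)=\sum_{s=t-\tau_i(t)}^{t-1}\alpha(s)$, I would rewrite this as
\begin{equation*}
\gamma_i(t) - \nabla\psi(q(t-1))_i \;=\; -\sum_{s=t-\tau_i(t)}^{t-1}\bigl(X\alpha(s)\bigr)_i .
\end{equation*}

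The key step is then to pull the sum outside the norm in a way that keeps the Frobenius norm of $X$ on the right‑hand side. Since $\tau_i(t)\le T_d$, I can extend the summation range uniformly to $s\in\{t-T_d,\dots,t-1\}$ by introducing indicators, defining the vector $\delta^{(s)}(t)\in\mathbb{R}^n$ with $i$‑th entry $-\mathbf{1}\{s\ge t-\tau_i(t)\}(X\alpha(s))_i$, so that $\gamma(t)-\nabla\psi(q(t-1)) = \sum_{s=t-T_d}^{t-1}\delta^{(s)}(t)$. The masking is componentwise, so
\begin{equation*}
\Vert \delta^{(s)}(t)\Vert^2 \;=\; \sum_i \mathbf{1}\{s\ge t-\tau_i(t)\}\,(X\alpha(s))_i^2 \;\le\; \Vert X\alpha(s)\Vert^2.
\end{equation*}
Applying the triangle inequality and then $\Vert X\alpha(s)\Vert\le\Vert X\Vert_2\Vert\alpha(s)\Vert\le\Vert X\Vert_F\Vert\alpha(s)\Vert$ gives the claimed bound.

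The only real subtlety is the bookkeeping with the delays: one must be careful to introduce the indicators \emph{before} applying the triangle inequality, so that each $\delta^{(s)}(t)$ is controlled by the full vector norm $\Vert X\alpha(s)\Vert$ rather than by something component‑dependent. Everything else is a direct consequence of the linear dynamics and standard norm inequalities, and the bound Assumption~\ref{assump:1} provides ($\tau_i(t)\le T_d$) is used exactly once, to set the lower limit of the outer sum.
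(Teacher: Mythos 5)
Your proof is correct. The opening steps coincide with the paper's: both reduce $\gamma_i(t)-\nabla_i\psi(q(t-1))$ to $v_i(t-\tau_i(t))-v_i(t)$ and then, via the linear dynamics, to a telescoping sum of the increments $\alpha(s)$ over the window $s\in\{t-\tau_i(t),\dots,t-1\}$. Where you diverge is in how the heterogeneous delays are handled. The paper stays componentwise throughout: it bounds $|\gamma_i(t)-\nabla_i\psi(q(t-1))|\le\Vert X_i\Vert\sum_{t'=t-T_d}^{t-1}\Vert\alpha(t')\Vert$ by Cauchy--Schwarz on the $i$th row of $X$, and only at the end assembles $\Vert X\Vert_F=\bigl(\sum_i\Vert X_i\Vert^2\bigr)^{1/2}$ by taking the Euclidean norm over $i$. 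You instead decompose the whole error vector as a sum over time of the masked vectors $\delta^{(s)}(t)$ and apply a matrix norm of $X$ to each time slice; the masking step is sound, since zeroing out components can only decrease the Euclidean norm, and the indicators correctly reproduce the per-component windows when summed over $s$. Your route in fact establishes the slightly sharper bound with $\Vert X\Vert_2=\sigma_{\max}(X)$ in place of $\Vert X\Vert_F$, which you then relax to match the statement; the paper's row-by-row bookkeeping lands on $\Vert X\Vert_F$ directly and cannot easily yield the spectral-norm constant because each row is summed over a different time window. Both arguments invoke Assumption~\ref{assump:1} only once, to extend the summation range down to $t-T_d$.
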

\begin{proof}
For each $i$, 
\begin{align*}
|\gamma_i(t) - \nabla_i \psi(q(t-1))| &= |\hat{d}_i(v_i(t-\tau_i(t) )) - \hat{d}_i(v_i(t))|\\
                                 &= |v_i(t-\tau_i(t) ) - v_i(t)| \\
                                 &= |X_i^T (q(t-1) - q(t-1-\tau_i(t)))|\\
                                 &\leq \Vert X_i \Vert \Vert q(t-1) - q(t-1-\tau_i(t))\Vert\\
                                 &= \Vert X_i \Vert \Vert \sum_{t' = t-\tau_i(t)}^{t-1} \alpha(t') \Vert\\
                                 &\leq \Vert X_i \Vert \sum_{t' = t-\tau_i(t)}^{t-1} \Vert \alpha(t')\Vert\\
                                 &\leq \Vert X_i \Vert \sum_{t' = t-T_d}^{t-1} \Vert \alpha(t')\Vert
\end{align*}
where $X_i$ is the $i$th row of $X$. Sum over $i$ and the lemma follows.
\end{proof}
\begin{lemma}\label{lem:gamma_alpha}
$$\gamma(t)^T \alpha(t) = -\frac{1}{\epsilon} \Vert \alpha(t)\Vert^2 - \sum_{i=1}^n M_i |\alpha_i(t)|$$
where $M_i:=\frac{\bar{v}_i-\underline{v}_i}{2}$.
\end{lemma}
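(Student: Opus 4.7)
The plan is to prove the identity componentwise: show $\gamma_i(t)\,\alpha_i(t) = -\tfrac{1}{\epsilon}\alpha_i(t)^2 - M_i|\alpha_i(t)|$ for each $i$, then sum over $i$. Components with $t\notin T_i$ are handled by the update rule (\ref{eq:asyn_noupdate_1}), which gives $\alpha_i(t) = 0$ and makes both sides vanish.

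For $t\in T_i$, I would write $w := v_i(t-\tau_i(t))$, so that $\gamma_i(t) = w - \tfrac{\bar{v}_i+\underline{v}_i}{2}$ and, by (\ref{eq:asyn_update_1}), $\alpha_i(t) = -\epsilon d_i(w) = -\epsilon\bigl([w-\bar{v}_i]^+ - [\underline{v}_i - w]^+\bigr)$. Then split into three cases according to where $w$ lies relative to $[\underline{v}_i,\bar{v}_i]$. If $\underline{v}_i \le w \le \bar{v}_i$, then $d_i(w)=0$, so $\alpha_i(t)=0$ and the identity reduces to $0=0$.

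The two nontrivial cases are symmetric. If $w>\bar{v}_i$, then $\alpha_i(t) = -\epsilon(w-\bar{v}_i)$ and $|\alpha_i(t)| = \epsilon(w-\bar{v}_i)$. Writing $\gamma_i(t) = (w-\bar{v}_i) + \tfrac{\bar{v}_i-\underline{v}_i}{2}$ and multiplying by $\alpha_i(t)$ produces exactly the two terms $-\epsilon(w-\bar{v}_i)^2 = -\tfrac{1}{\epsilon}\alpha_i(t)^2$ and $-\epsilon M_i(w-\bar{v}_i) = -M_i|\alpha_i(t)|$. The case $w<\underline{v}_i$ is handled identically, rewriting $\gamma_i(t) = -\bigl[(\underline{v}_i-w) + \tfrac{\bar{v}_i-\underline{v}_i}{2}\bigr]$ and using $\alpha_i(t) = \epsilon(\underline{v}_i-w) > 0$.

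Summing $\gamma_i(t)\alpha_i(t) = -\tfrac{1}{\epsilon}\alpha_i(t)^2 - M_i|\alpha_i(t)|$ across $i$ yields the lemma. There is no real obstacle here; the only thing to be careful about is algebraically recognizing the decomposition $w - \tfrac{\bar v_i+\underline v_i}{2} = (w-\bar v_i) + M_i$ (or its lower-threshold analogue), which is precisely what pulls out the $M_i|\alpha_i(t)|$ term and produces the clean identity.
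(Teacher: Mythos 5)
Your proof is correct and follows essentially the same route as the paper: both argue componentwise, split into the three cases determined by where the (delayed) voltage sits relative to $[\underline{v}_i,\bar{v}_i]$, and use the decomposition $\hat{d}_i = d_i \pm M_i$ to extract the $-M_i|\alpha_i(t)|$ term before summing over $i$. No gaps.
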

\begin{proof}
For each i, $\gamma_i(t) =\hat{d}_i(v_i(t-\tau_i(t)))$. For notational simplicity, we denote $t-\tau_i(t)$ by $t'$. If bus $i$ updates at $t$, then $\alpha_i(t) = -\epsilon d_i(v_i(t'))$. If $d_i(v_i(t'))>0$, then $\hat{d}_i(v_i(t'))= d_i(v_i(t')) + \frac{\bar{v}_i - \underline{v}_i}{2}$ and $\hat{d}_i(v_i(t'))d_i(v_i(t'))= d_i(v_i(t'))^2 + \frac{\bar{v}_i - \underline{v}_i}{2} d_i(v_i(t'))$, i.e. $\gamma_i(t)\alpha_i(t) = (-\frac{1}{\epsilon})\alpha_i(t)^2 - \frac{\bar{v}_i - \underline{v}_i}{2} |\alpha_i(t)|$. If $d_i(v_i(t'))<0$, then $\hat{d}_i(v_i(t'))= d_i(v_i(t')) - \frac{\bar{v}_i - \underline{v}_i}{2}$ and hence $\hat{d}_i(v_i(t'))d_i(v_i(t')) = d_i(v_i(t'))^2 - \frac{\bar{v}_i - \underline{v}_i}{2}d_i(v_i(t')) $, i.e. $\gamma_i(t)\alpha_i(t)= (-\frac{1}{\epsilon})\alpha_i(t)^2 - \frac{\bar{v}_i - \underline{v}_i}{2}|\alpha_i(t)|$. If $d_i(v_i(t')) =0$ or bus $i$ does not update, we also have, $\gamma_i(t)\alpha_i(t)= (-\frac{1}{\epsilon})\alpha_i(t)^2 - \frac{\bar{v}_i - \underline{v}_i}{2}|\alpha_i(t)|$ because both sides of the equality are zero. Sum the equality over $i$,
\begin{align*}
\gamma(t)^T \alpha(t) &= \sum_{i=1}^n \Big[-\frac{1}{\epsilon}\alpha_i(t)^2 - \frac{\bar{v}_i - \underline{v}_i}{2}|\alpha_i(t)|\Big]\\
                      &= -\frac{1}{\epsilon} \Vert \alpha(t)\Vert^2 - \sum_{i=1}^n M_i |\alpha_i(t)|
\end{align*}
\end{proof}
Now we are ready to prove Theorem \ref{thm:asyn_1}.
\begin{proof}
With Lemma \ref{lem:gamma_psi} and Lemma \ref{lem:gamma_alpha} we have
\begin{align*}
& \psi(q(t)) - \psi(q(t-1)) \\
 \leq&\nabla \psi(q(t-1))^T \alpha(t) + \frac{1}{2} \alpha(t)^T \nabla^2 \psi(q(t-1)) \alpha(t) \\
                           \leq& \Vert\nabla \psi(q(t-1))-\gamma(t)\Vert \Vert \alpha(t)\Vert+\gamma(t)^T\alpha(t) + \frac{1}{2} \alpha(t)^T X \alpha(t)\\
                          \leq& \Vert X\Vert_F \sum_{t' = t-T_d}^{t-1} \Vert \alpha(t')\Vert \Vert \alpha(t)\Vert \\
                          &- \sum_{i=1}^n M_i |\alpha_i(t)| + (-\frac{1}{\epsilon} +\frac{1}{2} \sigma_{max}(X)) \Vert \alpha(t)\Vert^2 \\
                          \leq& \frac{1}{2}\Vert X\Vert_F \sum_{t' = t-T_d}^{t-1} (\Vert \alpha(t')\Vert^2+ \Vert \alpha(t)\Vert^2)  \\
                          &- \sum_{i=1}^n M_i |\alpha_i(t)|+(-\frac{1}{\epsilon} + \frac{1}{2} \sigma_{max}(X))\Vert \alpha(t)\Vert^2                          
\end{align*}

Sum from $t=1$ to $t=T$, we have
\begin{align}
&\psi(q(T)) - \psi(q(0))\nonumber\\
 \leq& \frac{1}{2}\Vert X\Vert_F \sum_{t=1}^{T}\sum_{t' = t-T_d}^{t-1} (\Vert \alpha(t')\Vert^2+ \Vert \alpha(t)\Vert^2) \nonumber\\
 & +(-\frac{1}{\epsilon} + \frac{1}{2} \sigma_{max}(X))\sum_{t=1}^{T}\Vert \alpha(t)\Vert^2 - \sum_{t=1}^{T}\sum_{i=1}^n M_i |\alpha_i(t)|\nonumber\\
                          =& \frac{1}{2}\Vert X\Vert_F \sum_{t=1}^{T}\sum_{t' = t-T_d}^{t-1} \Vert \alpha(t')\Vert^2- \sum_{t=1}^{T}\sum_{i=1}^n M_i |\alpha_i(t)|\nonumber \\
                          &+  (-\frac{1}{\epsilon} + \frac{1}{2} \sigma_{max}(X) +\frac{T_d}{2}\Vert X\Vert_F)\sum_{t=1}^{T}\Vert \alpha(t)\Vert^2\nonumber\\                                                    
                          \leq& (-\frac{1}{\epsilon} + \frac{1}{2} \sigma_{max}(X) + T_d\Vert X\Vert_F)\sum_{t=1}^{T}\Vert \alpha(t)\Vert^2 \nonumber\\
                          &- \sum_{t=1}^{T}\sum_{i=1}^n M_i |\alpha_i(t)| \label{eq:upperbound_alpha}
\end{align}

Since $\psi(q(T))$ is lower bounded and $M_i\geq 0$, if $\epsilon<\frac{2}{\sigma_{max}(X) + 2\Vert X\Vert_F T_d}$, $\sum_{t=1}^{T}\Vert \alpha(t)\Vert^2 $ is upper bounded, hence $\alpha(t) \rightarrow 0$. For each $i$, denote the elements in $T_i$ by an increasing sequence $\{t_i(n)\}_{n=1}^{\infty}$. Denote $t'_i(n) =t_i(n)- \tau_i(t_i(n))$, then by (\ref{eq:asyn_update_1}), $d_i(v_i(t'_i(n)))\rightarrow 0$ as $n\rightarrow\infty$. For each $t$, by assumption \ref{assump:1}, $\exists n_t\in \mathbb{N}$ s.t. $t - T_a - T_d \leq t'_i(n_t)\leq t$. So
\begin{align}\label{eq:proof_thm3_d}
|d_i(v_i(t)) - d_i(v_i(t'_i(n_t)))| & \leq | v_i(t) - v_i(t'_i(n_t))| \nonumber\\
                                    & \leq \Vert X_i\Vert \Vert q(t-1) - q(t'_i(n_t)-1)\Vert \nonumber \\
                                    & \leq \Vert X_i\Vert \sum_{t' = t-T_a -T_d }^{t-1} \Vert \alpha(t')\Vert
\end{align}
Where the first inequality follows from the fact that $d_i(v)$ is a Lipshitz function with Lipshitz coefficient $1$. Let $t\rightarrow\infty$, then $n_t\rightarrow \infty$, so $ d_i(v_i(t'_i(n_t)))\rightarrow 0$. Also notice the right hand side of (\ref{eq:proof_thm3_d}) converges to 0 (because it's a finite sum of sequences each of which converges to 0), we have $d_i(v_i(t))\rightarrow 0$. This holds for all $i$, so $\Vert d(v(t))\Vert \rightarrow 0$.

For each $i$, if $M_i >0$ (i.e. $\bar{v}_i > \underline{v}_i$), by (\ref{eq:upperbound_alpha}) $\sum_{t=1}^T |\alpha_i(t)|$ is upper bounded, hence this series converges. This says $|q_i(T+k) - q_i(T)| \leq \sum_{t = T+1}^{T+k} |\alpha_i(t)| \rightarrow 0$ as $T\rightarrow \infty$. So $q_i(t)$ is a Cauchy sequence and it converges to a point $q_i^*$ \cite{rudin1964principles}. For $i$ that $M_i=0$ (i.e. $\bar{v}_i = \underline{v}_i = v_i^*$), we have $\hat{d}_i(v_i) = d_i(v_i)$, so $\hat{d}_i(t)$ converges to $0$, i.e. $v_i(t)$ converges to $v_i^*$. Now we have for each $i$ either $q_i(t)$ converges or $v_i(t)$ converges. Without loss of generality, we assume for $i = 1,\ldots,m$, $q_i(t)$ converges and for $i = m+1,\ldots,n$, $v_i(t)$ converges. Define a vector $z(t):=(v_1(t),\ldots, v_m(t), q_{m+1}(t-1),\ldots, q_n(t-1))^T-(v_1^{par},\ldots,v_m^{par},0,\ldots,0)^T$. Due to the linear relationship between $v(t)$ and $q(t-1)$, we have: 
$$ z(t)= \left( 
\begin{array}{ccc}
I_m & 0 \\
X_m^1 & X_m^2  \end{array} \right)q(t-1):=\Lambda q(t-1) $$
where $I_m$ is the $m$-dimensional identity matrix, $X_m^1$ and $X_m^2$ together make the $m+1$'s through the $n$'s row of $X$. Since $X$ is positive definite, we have $X_m^2$ is invertible, which leads to $\Lambda$ is invertible. Since $z(t)$ converges, we have $q(t)$ converges, and further $v(t)$ converges to a point $v^*$. Since $\Vert d(v(t))\Vert \rightarrow 0$, $v^*$ lies in the acceptable range.
\end{proof} 

\section{A decentralized algorithm to reach an optimal feasible point}\label{sec:dec_opt}
In this section, we will introduce local auxiliary variables and design a new control algorithm, which we name as algorithm 2. Algorithm 2 will not only drive the voltage into an equilibrium point in $[\underline{v},\bar{v}]$, but also guarantee that the equilibrium point will minimize a cost of reactive power provision in a certain form. We first provide the algorithm and then discuss its convergence and the optimality of the equilibrium point. 

As in algorithm 1, we use the set $T_i$ to represent the time steps that bus $i$ updates its reactive power injection. For each bus $i$, we introduce two local auxiliary variables, $\bar{\lambda}_i$ and $\underline{\lambda}_i$. 

At each time $t$, if $t\not\in T_i$, all the variables stay the same as the previous time step:
\begin{subeqnarray}
\label{eq:asyn_noupdate}
\bar{\lambda}_i(t)&=&\bar{\lambda}_i(t-1) \\
\underline{\lambda}_i(t)&=& \underline{\lambda}_i(t-1) \\
q_i(t)&=& q_i(t-1)
\end{subeqnarray}

If $t\in T_i$,
\begin{subeqnarray} \label{eq:asyn_update}
\bar{\lambda}_i(t)&=&\left[\bar{\lambda}_i(t-1)+\epsilon\left(v_i\left(t-\tau_i(t)\right)-\bar{v}_i\right)\right]^+\\
\underline{\lambda}_i(t)&=&\left[\underline{\lambda}_i(t-1)+\epsilon\left(\underline{v}_i-v_i\left(t-\tau_i(t)\right)\right)\right]^+ \\
q_i(t)&=&\underline{\lambda}_i(t-\tau'_i(t))-\bar{\lambda}_i(t-\tau'_i(t))
\end{subeqnarray}
where $\tau_i(t)$ and $\tau'_i(t)$ capture the measurement, computation, and actuation delay.\footnote{Similar to Algorithm 1, for $t\not\in T_i$ we abuse the notation $\tau_i(t)$ and $\tau'_i(t)$ to be any nonnegative integers.} The diagram of the algorithm is shown in Figure~\ref{fig:qve_dec}.
\begin{figure}[ht]
\centering
\includegraphics[width=2in]{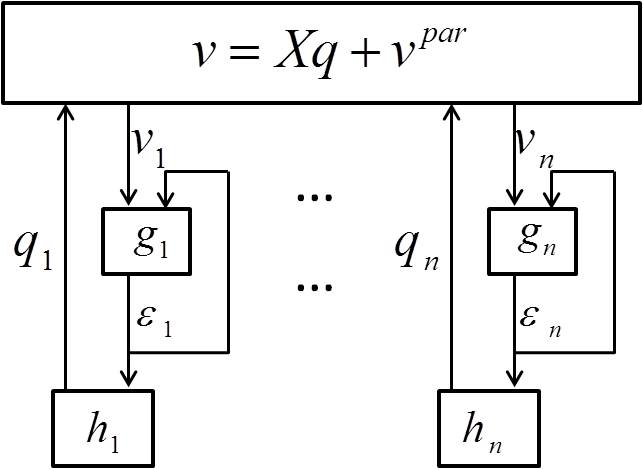}
\caption{Volt/Var control using local information of voltage, reactive power injection, and additional auxiliary variables: This type of controller is able to guarantee both of feasibility and optimality.}\label{fig:qve_dec}
\end{figure}

In this algorithm, the control of the local reactive power depends only on the two local auxiliary variables and the updating rule of the two variables depends on their own values and the current local voltage measurement, requiring no communication and no information about $v^{par}$. Moreover, the updating rule is also similar to an integral controller with saturation. As a result, this algorithm shares the same properties with algorithm 1, making it also attractive to practical applications.

Before we proceed to the convergence analysis of algorithm 2, we point out its relation to the dual gradient algorithm of an optimization problem. To simplify illustration, we consider the following synchronous and delay-free version of algorithm 2:
\begin{subeqnarray}
\label{eq:dual_dec}
\bar{\lambda}_i(t)&=&\left[\bar{\lambda}_i(t-1)+\epsilon(v_i(t)-\bar{v}_i)\right]^+\\
\underline{\lambda}_i(t)&=&\left[\underline{\lambda}_i(t-1)+\epsilon\left(\underline{v}_i-v_i(t)\right)\right]^+ \\
q_i(t)&=&\underline{\lambda}_i(t)-\bar{\lambda}_i(t)
\end{subeqnarray}
We have the following theorem regarding the convergence of algorithm (\ref{eq:dual_dec}):
\begin{theorem}\label{thm:alg2-syn}
If $\epsilon <\frac{1}{\sigma_{\max}(X)}$, $q(t)$ in algorithm (\ref{eq:dual_dec}) converges to the optimal point $q^*$ of the following optimization problem:
\begin{subeqnarray}
\label{eq:min_q}
\min_{q} && \frac{1}{2} q^T X q \\
s.t. &&  Xq+v^{par}\leq \bar{v}, \\
&& Xq+v^{par}\geq \underline{v},
\end{subeqnarray}
and $v(t)$ converges to the corresponding voltage $v^*=Xq^*+v^{par}$, which lies in the acceptable range $[\underline{v},\bar{v}]$.
\end{theorem}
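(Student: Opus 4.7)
The plan is to recognize algorithm~(\ref{eq:dual_dec}) as projected gradient ascent on the dual of the convex quadratic program~(\ref{eq:min_q}) and then invoke the standard convergence theory for such iterations. First I would form the Lagrangian
\[
L(q,\bar\lambda,\underline\lambda)=\tfrac12 q^T X q+\bar\lambda^T(Xq+v^{par}-\bar v)+\underline\lambda^T(\underline v-Xq-v^{par}),
\]
with multipliers $\bar\lambda,\underline\lambda\ge 0$. Since $X\succ 0$ by the earlier positive-definite lemma, the stationarity condition $\nabla_q L=0$ yields the unique inner minimizer $q^\star(\bar\lambda,\underline\lambda)=\underline\lambda-\bar\lambda$. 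Substituting back gives a concave quadratic dual $D(\bar\lambda,\underline\lambda)$, and the envelope theorem produces $\nabla_{\bar\lambda}D=Xq^\star+v^{par}-\bar v$ and $\nabla_{\underline\lambda}D=\underline v-Xq^\star-v^{par}$.

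Next I would tie these gradients to the algorithm. Because the iteration maintains $q(t-1)=\underline\lambda(t-1)-\bar\lambda(t-1)=q^\star(\bar\lambda(t-1),\underline\lambda(t-1))$, the plant equation $v(t)=Xq(t-1)+v^{par}$ identifies $v(t)-\bar v$ with $\nabla_{\bar\lambda}D$ evaluated at $(\bar\lambda(t-1),\underline\lambda(t-1))$, and $\underline v-v(t)$ with $\nabla_{\underline\lambda}D$ there. Hence~(\ref{eq:dual_dec}) is exactly one projected-gradient step of step size $\epsilon$ on the concave dual $D$ over the nonnegative orthant. A direct calculation of $\nabla^2 D=\bigl(\begin{smallmatrix}-X&X\\X&-X\end{smallmatrix}\bigr)$ shows it has eigenvalue $0$ on the subspace $\{(u,u)\}$ and $-2\sigma_i(X)$ on $\{(u,-u):Xu=\sigma_i u\}$, so $\nabla D$ is Lipschitz with constant $L=2\sigma_{\max}(X)$. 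Combined with Slater's condition --- which holds whenever $\underline v<\bar v$ because $X$ is invertible and any interior $v$ can be hit by some $q$ --- this gives strong duality and a nonempty, convex, bounded dual optimum set.

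The main subtlety is that $D$ is only concave, not strictly so: the Hessian is singular along $(u,u)$, so dual optima form an affine set and individual multipliers need not settle at a prescribed point. Nevertheless, the standard convergence theorem for projected gradient ascent on a concave $L$-smooth function with step $\epsilon<2/L=1/\sigma_{\max}(X)$ (e.g.\ via the averaged-operator / Krasnoselskii--Mann framework) shows that $(\bar\lambda(t),\underline\lambda(t))$ still converges to \emph{some} dual optimum $(\bar\lambda^\star,\underline\lambda^\star)$. Strict convexity of $\tfrac12 q^TXq$ makes the primal minimizer $q^\star$ unique, and every dual optimum satisfies $q^\star=\underline\lambda^\star-\bar\lambda^\star$, so $q(t)=\underline\lambda(t)-\bar\lambda(t)\to q^\star$. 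Finally $v(t)=Xq(t-1)+v^{par}\to v^\star=Xq^\star+v^{par}$, which lies in $[\underline v,\bar v]$ by primal feasibility of $q^\star$.
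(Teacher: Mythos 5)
Your proposal is correct and follows essentially the same route as the paper: identify (\ref{eq:dual_dec}) as projected dual gradient ascent for (\ref{eq:min_q}), compute $\nabla^2 D$ and its spectrum to get the smoothness constant $2\sigma_{\max}(X)$, and conclude convergence under $\epsilon<1/\sigma_{\max}(X)$. If anything, you are more careful than the paper's one-line conclusion, since you explicitly handle the degeneracy of $D$ along the subspace $\{(u,u)\}$ and explain why convergence of $\lambda(t)$ to \emph{some} dual optimum still forces $q(t)=\underline{\lambda}(t)-\bar{\lambda}(t)$ to converge to the unique primal optimum.
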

\begin{proof}
Introducing dual variable $\bar{\lambda}$ for (\ref{eq:min_q}b) and $\underline{\lambda}$ for (\ref{eq:min_q}c), we have the following dual gradient algorithm \cite{bertsekas1999nonlinear}:
\begin{eqnarray*}
\bar{\lambda}(t)&=&\left[\bar{\lambda}(t-1)+\epsilon(Xq(t-1)+v^{par}-\bar{v})\right]^+\\
\underline{\lambda}(t)&=&\left[\underline{\lambda}(t-1)+\epsilon\left(\underline{v}-Xq(t-1)-v^{par}\right)\right]^+ \\
q(t)&=&\text{arg}\min_q \{\frac{1}{2}q^T X q  +\bar{\lambda}(t) (Xq+v^{par}-\bar{v}) \\
&&+\underline{\lambda}(t)(\underline{v}-Xq-v^{par})\}
\end{eqnarray*}
The preceding algorithm is equivalent to the following:
\begin{eqnarray*}
\bar{\lambda}(t)&=&\left[\bar{\lambda}(t-1)+\epsilon(v(t)-\bar{v})\right]^+\\
\underline{\lambda}(t)&=&\left[\underline{\lambda}(t-1)+\epsilon\left(\underline{v}-v(t)\right)\right]^+ \\
q(t)&=&\underline{\lambda}(t)-\bar{\lambda}(t)
\end{eqnarray*}
which is exactly the algorithm (\ref{eq:dual_dec}). 

Through simple derivation, the dual problem of (\ref{eq:min_q}) is given by: 
\begin{equation}
\label{eq:dual}
\max_{\bar{\lambda},\underline{\lambda}\geq0} D= -\frac{1}{2} (\underline{\lambda}-\bar{\lambda})^TX (\underline{\lambda}-\bar{\lambda}) +\bar{\lambda}^T(v^{par}-\bar{v})+\underline{\lambda}^T(\underline{v}-v^{par})
\end{equation}

By Lemma~\ref{prop:hessianD} in Appendix, we have $\sigma_{\min}(\nabla^2 D)=-2\sigma_{\max}(X)$. Therefore, we know that if $\epsilon<\frac{1}{\sigma_{\max}(X)}$, $\bar{\lambda}(t), \underline{\lambda}(t)$ converge to the dual optimum. The statement of the theorem then follows. 
\end{proof}

The preceding theorem shows that algorithm 2 is the dual gradient algorithm for (\ref{eq:min_q}) with heterogeneous update rates and delays. Due to this connection, it is intuitive that despite the heterogeneous updates and the delays, algorithm 2 shall still converge, although the convergence might require tighter bounds on $\epsilon$. Before presenting the convergence of algorithm 2, we require the following mild assumption: 
\begin{assumption}\label{assump:2}
$\forall i$, the difference between consecutive update time steps in $T_i$ is upper bounded by $T_a$. $\forall i$ and $\forall t$, $\tau_i(t)$ and $\tau'_i(t)$ are upper bounded by $T_d$. 
\end{assumption}
\begin{theorem} \label{thm:asyn}
In algorithm 2 (\ref{eq:asyn_noupdate}) and (\ref{eq:asyn_update}), if Assumption \ref{assump:2} holds and $\epsilon < \frac{1}{\sigma_{max}(X) +2\Vert X\Vert_F (2T_d+T_a)}$, $q(t)$ and $\lambda(t)$ will converge to the optimizer of the primal problem (\ref{eq:min_q}) and the dual problem (\ref{eq:dual}) respectively, and $v(t)$ will converge to a point $v^*\in[\underline{v},\bar{v}]$.
\end{theorem}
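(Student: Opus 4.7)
The plan is to recognize Algorithm 2 as an asynchronous, delayed projected dual gradient ascent on the dual problem (\ref{eq:dual}), and to adapt the Lyapunov argument used for Theorem \ref{thm:asyn_1}. I would stack the dual variables into $\lambda(t):=(\bar\lambda(t)^{\top},\underline\lambda(t)^{\top})^{\top}\in\mathbb{R}^{2n}$ and take the Lyapunov function $\Phi(\lambda):=-D(\lambda)$, which is convex and, on the feasible orthant $\lambda\geq 0$, bounded below by $-D(\lambda^{\star})$. From the calculation in the proof of Theorem \ref{thm:alg2-syn}, $\nabla\Phi(\lambda)=(\bar v - v,\; v-\underline v)$ with $v=X(\underline\lambda-\bar\lambda)+v^{par}$, and the Hessian $\nabla^{2}\Phi$ has largest eigenvalue $2\sigma_{\max}(X)$. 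This factor of two compared to Algorithm 1 is exactly what accounts for the difference between the step-size thresholds in Theorem \ref{thm:asyn_1} and Theorem \ref{thm:asyn}.

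Following the template of the Theorem \ref{thm:asyn_1} proof, let $\alpha(t):=\lambda(t)-\lambda(t-1)$ and let $\gamma(t)$ be the surrogate subgradient actually used at time $t$, which is built from the delayed voltage measurements $v_i(t-\tau_i(t))$. Crucially, $v(t-\tau_i(t))$ depends on $q(t-1-\tau_i(t))$, which was itself assembled from $\lambda$-values delayed by some $\tau'_{\cdot}$ at that earlier instant, so two layers of delay interact through the linear power flow (\ref{eq:v_pq}). I would prove two analogs of Lemmas \ref{lem:gamma_psi} and \ref{lem:gamma_alpha}: (i) a bound
\[
\|\gamma(t)-\nabla\Phi(\lambda(t-1))\|\;\leq\;2\|X\|_F\sum_{t'=t-(2T_d+T_a)}^{t-1}\|\alpha(t')\|,
\]
where the window $2T_d+T_a$ captures the voltage measurement delay ($T_d$), the nested $\lambda\!\to\!q$ actuation delay ($T_d$), and the asynchronous update lag ($T_a$ during which some coordinates may not refresh); and (ii) an identity $\gamma(t)^{\top}\alpha(t)=-\tfrac{1}{\epsilon}\|\alpha(t)\|^{2}-R(t)$ with $R(t)\geq 0$, the nonnegative residue now coming from the projection $[\cdot]^{+}$ onto the nonnegative orthant (coordinatewise complementary slackness playing the role of the $M_i$ penalty in Lemma \ref{lem:gamma_alpha}).

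Combining these with the second-order expansion $\Phi(\lambda(t))-\Phi(\lambda(t-1))\leq\nabla\Phi(\lambda(t-1))^{\top}\alpha(t)+\sigma_{\max}(X)\|\alpha(t)\|^{2}$, applying Cauchy--Schwarz and AM--GM to split the delay cross terms, and telescoping from $t=1$ to $T$, the coefficient of $\sum_{t}\|\alpha(t)\|^{2}$ becomes $-\tfrac{1}{\epsilon}+\sigma_{\max}(X)+2\|X\|_F(2T_d+T_a)$, which is negative exactly under the stated step-size condition. Since $\Phi$ is bounded below, $\sum\|\alpha(t)\|^{2}<\infty$ and hence $\alpha(t)\to 0$; the coordinatewise bound $\sum_{t}|\alpha_i(t)|<\infty$, extracted from the $R(t)$ residues together with Assumption \ref{assump:2}, makes each $\lambda_i(t)$ Cauchy, so $\lambda(t)\to\lambda^{\infty}\geq 0$. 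That $\alpha(t)\to 0$ together with the projected-update rule forces $\lambda^{\infty}$ to satisfy the KKT stationarity and complementary slackness conditions of (\ref{eq:min_q}); therefore $\lambda^{\infty}$ is dual optimal, $q(t)\to q^{\star}=\underline\lambda^{\infty}-\bar\lambda^{\infty}$ is primal optimal, and $v(t)\to v^{\star}\in[\underline v,\bar v]$.

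The main obstacle is the bookkeeping behind the $(2T_d+T_a)$ window in step (i): the delayed voltage measurements and the delayed control actuation are nested and interact through the linear flow map $X$, and coordinates update at different times under Assumption \ref{assump:2}, so the three sources of delay have to be unfolded carefully to produce a bound that sums $\|\alpha(t')\|$ over exactly one contiguous window. Once this window is pinned down, the rest of the argument is structurally identical to the proof of Theorem \ref{thm:asyn_1}, with the projection onto the nonnegative orthant replacing the piecewise-linear $d_i(\cdot)$ in the quadratic identity.
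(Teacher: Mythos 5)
Your overall architecture matches the paper's: treat Algorithm 2 as delayed, asynchronous projected dual ascent; bound the gap between the delayed surrogate gradient and $\nabla D(\lambda(t-1))$ by $2\Vert X\Vert_F$ times a sum of increments over a window of length $2T_d+T_a$; use the projection inequality $\mu(t)^T\pi(t)\geq\frac{1}{\epsilon}\Vert\pi(t)\Vert^2$; and telescope a second-order expansion of the (quadratic) dual to obtain $\sum_t\Vert\pi(t)\Vert^2<\infty$ under the stated step-size condition. Up to that point you reproduce Lemmas \ref{lem:nablapi}, \ref{lem:Dmuerror} and \ref{lem:Dinqual}, including the correct constant $\sigma_{\max}(X)+2\Vert X\Vert_F(2T_d+T_a)$.

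The gap is in your final step. You claim a coordinatewise bound $\sum_t|\alpha_i(t)|<\infty$ can be ``extracted from the $R(t)$ residues,'' making each $\lambda_i(t)$ Cauchy. That is not available here. In Algorithm 1 the linear-in-$|\alpha_i(t)|$ residue $M_i|\alpha_i(t)|$ with $M_i=\frac{\bar v_i-\underline v_i}{2}>0$ comes from the deadband between $d_i$ and $\hat d_i$, i.e., from the specific geometry of the piecewise-linear controller. For the projected update $[\lambda(t)]_l=\bigl[[\lambda(t-1)]_l+\epsilon[\mu(t)]_l\bigr]^+$ the Projection Theorem yields only $[\mu(t)]_l[\pi(t)]_l\geq\frac{1}{\epsilon}[\pi(t)]_l^2$, with \emph{equality} (zero residue) whenever the projection is inactive; there is no uniformly positive coefficient multiplying $|[\pi(t)]_l|$, so square-summability of the increments is all you get, and that alone does not make $\lambda(t)$ Cauchy. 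The paper closes the argument differently: it first establishes boundedness of $\{\lambda(t)\}$ (from the lower bound $D(\lambda(t))\geq D(\lambda(0))$ together with the fact that for each $i$ at least one of $v_i^{par}-\bar v_i$ and $\underline v_i-v_i^{par}$ is negative), then shows every accumulation point $\lambda^*$ satisfies the fixed-point condition $[\lambda^*+\epsilon\nabla D(\lambda^*)]^+=\lambda^*$ --- which requires tracking the last update time $t_{i,k}\in[t_k-T_a,t_k)$ of each coordinate, since a given coordinate need not update at the subsequence times $t_k$ --- and finally proves the dual maximizer is unique via strict convexity of the primal plus complementary slackness (at least one of $\bar\lambda_i'$, $\underline\lambda_i'$ must vanish), so the bounded sequence with a unique accumulation point converges. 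You would need to replace your Cauchy argument with a chain of this form; the rest of your proposal is sound.
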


Different from Theorem \ref{thm:asyn_1}, in Theorem \ref{thm:asyn}, $T_a$ contributes a linear term in the denominator of the maximum stepsize. This means that the heterogeneous update rates will have a negative effect on choosing the stepsize of algorithm 2. 


\section{case study}\label{sec:case}
\begin{figure*}[!t]
     	\centering
     	\includegraphics[scale=0.40]{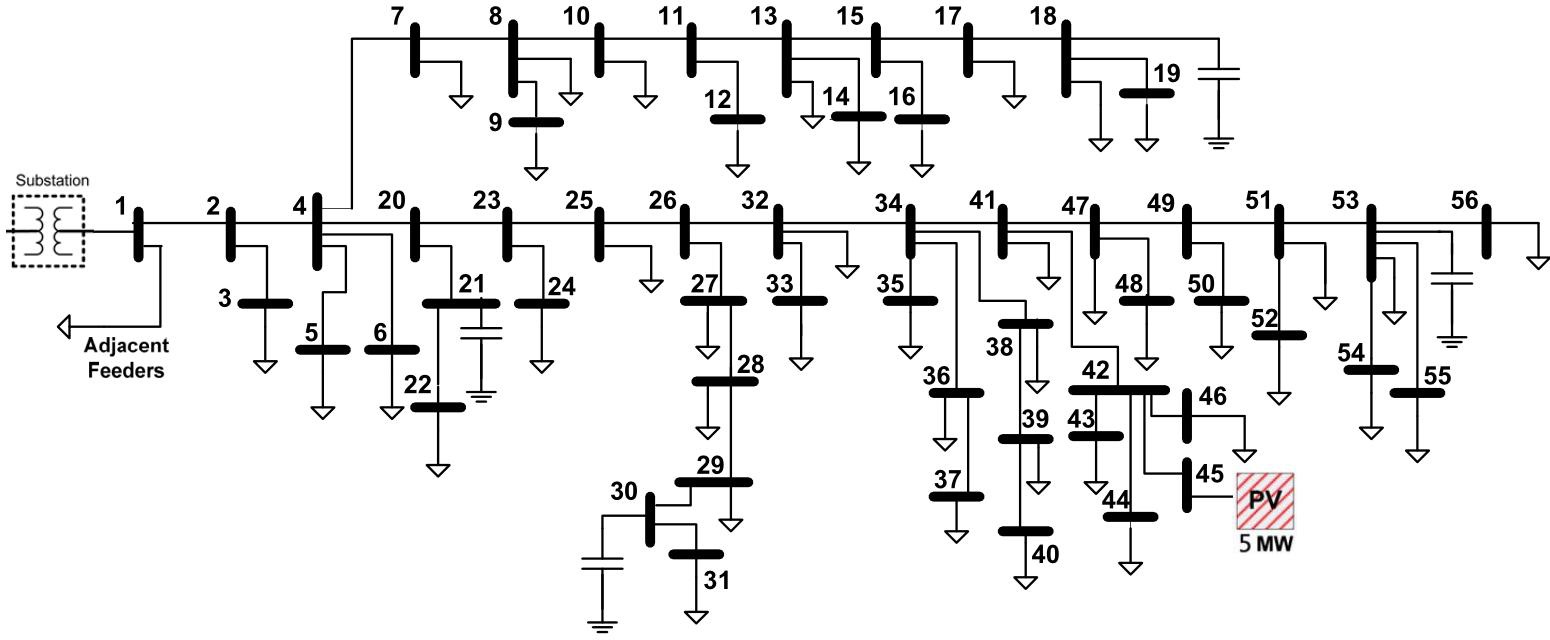}
      	\caption{Schematic diagram of two SCE distribution systems. }
      	\label{fig:circuit}
\end{figure*}

In this section we evaluate algorithm 1 and algorithm 2 on a distribution circuit of South California Edison with a high penetration of photovoltaic (PV) generation
\cite{ Farivar-2012-VVC-PES}.
Figure \ref{fig:circuit} shows a $56$-bus distribution circuit.  
Note that Bus $1$ indicates the substation, and there are 1 photovoltaic (PV) generators located 
at bus $45$ and there are shunt capacitors located at bus $19$, $21$, $30$, $53$. 
See  \cite{Farivar-2012-VVC-PES} for the network data including the line impedance, the peak MVA demand of the loads and the nameplate capacity of the shunt capacitors and the photovoltaic generation. 

In the simulation, we assume that there are Volt/Var control components at bus $19$, $21$, $30$, $45$, and $53$ and those control components can pull in (supply) and out (consume) reactive power. The nominate voltage magnitude is $12 \text{kV}$ and the acceptable range is set as $[11.4\text{kV}, 12.6\text{kV}]$ which is the plus/minus 5\% of the nominate value. Though the analysis of this paper is built on the linearized power flow model (\ref{eq:branchflow_linear}),  we simulate the voltage control algorithms using the full nonlinear AC power flow model (\ref{eq:branchflow_AC}) \cite{5491276}.

We simulate three different scenarios: 1) the PV generator is generating a large amount of power but the loads are moderate, resulting in high voltages at some buses (Figure \ref{fig:asyn_1} and Figure \ref{fig:asyn_2}); 2) the PV generator is generating a large amount of power and some buses are having heavy loads, resulting in high voltages at some buses and low voltages at other buses (Figure \ref{fig:asyn_3} and Figure \ref{fig:asyn_4}). In addition to the above two scenarios, we consider a third scenario where the load and the PV generation are constantly fluctuating, drawn from a uniform distribution (Figure \ref{fig:fluctuating}). The fluctuating speed is considered to be slower than the control algorithm updating speed, which means the controller has a certain amount of time steps to stabilize the voltages after each change in the load and the PV generation. This scenario is introduced to validate the performance of the algorithms under a more realistic setting.  The results of case I and case II are summarized in Figure \ref{fig:asyn_1} to Figure \ref{fig:asyn_4}. In the four figures, the buses update at heterogeneous clock rates and with measurement and actuation delay. Due to the delay, buses might make incorrect updates (e.g., in Figure \ref{fig:asyn_2}, at $t=20$, all the voltages are already within the acceptable range, but the buses drive the voltages outside of the range). However, despite the heterogeneous update clock rates and the delay, the algorithms still converge asymptotically. Figure \ref{fig:fluctuating} verifies the algorithm performance under a fluctuating load and PV generation setting. All the results verify the proposed algorithms' effectiveness. 

\begin{figure}[ht]
\centering
\includegraphics[width=3.7in]{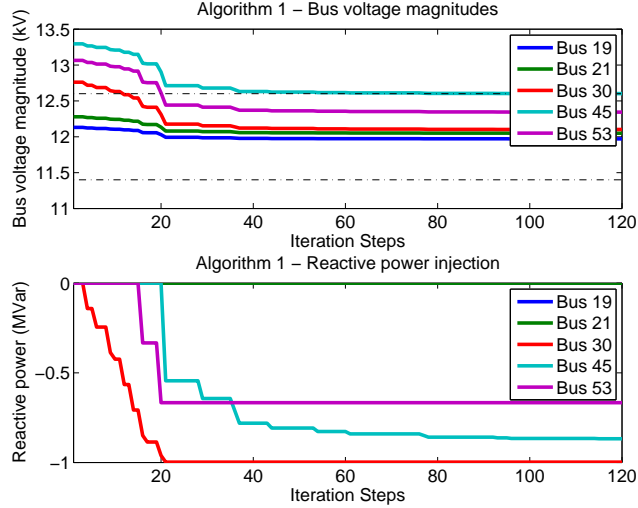}
\caption{Simulation result of case I, algorithm I. The upper figure shows the voltage profile, and the lower figure shows the reactive power injection. Simulation parameters: $T_a = 25$, $T_d = 15$, $\epsilon = 8$.}
\label{fig:asyn_1}
\end{figure}
\begin{figure}[ht]
\centering
\includegraphics[width=3.7in]{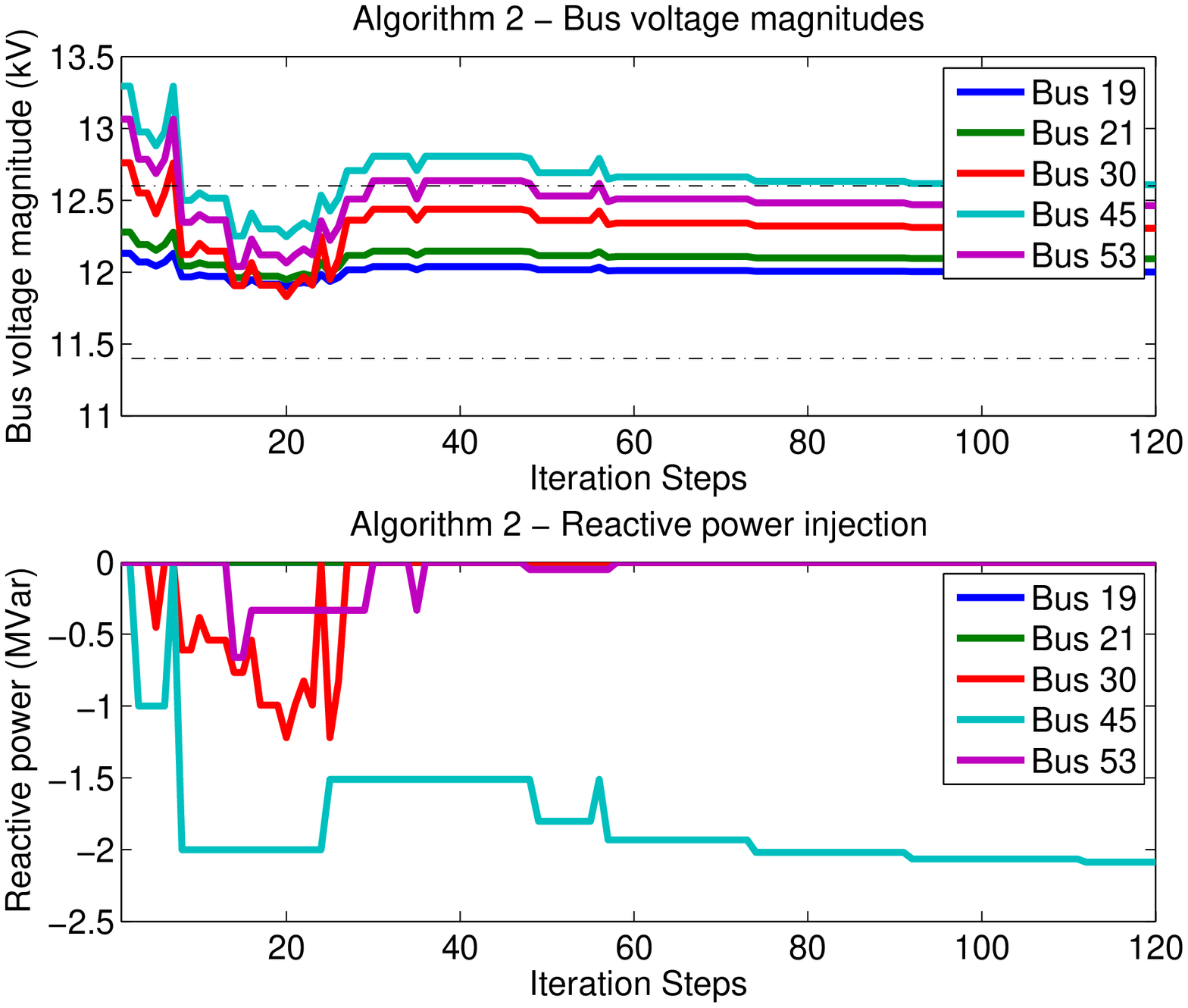}
\caption{Simulation result of case I, algorithm II. The upper figure shows the voltage profile, and the lower figure shows the reactive power injection. Simulation parameters: $T_a = 25$, $T_d = 15$, $\epsilon = 5$.}
\label{fig:asyn_2}
\end{figure}
\begin{figure}[ht]
\centering
\includegraphics[width=3.7in]{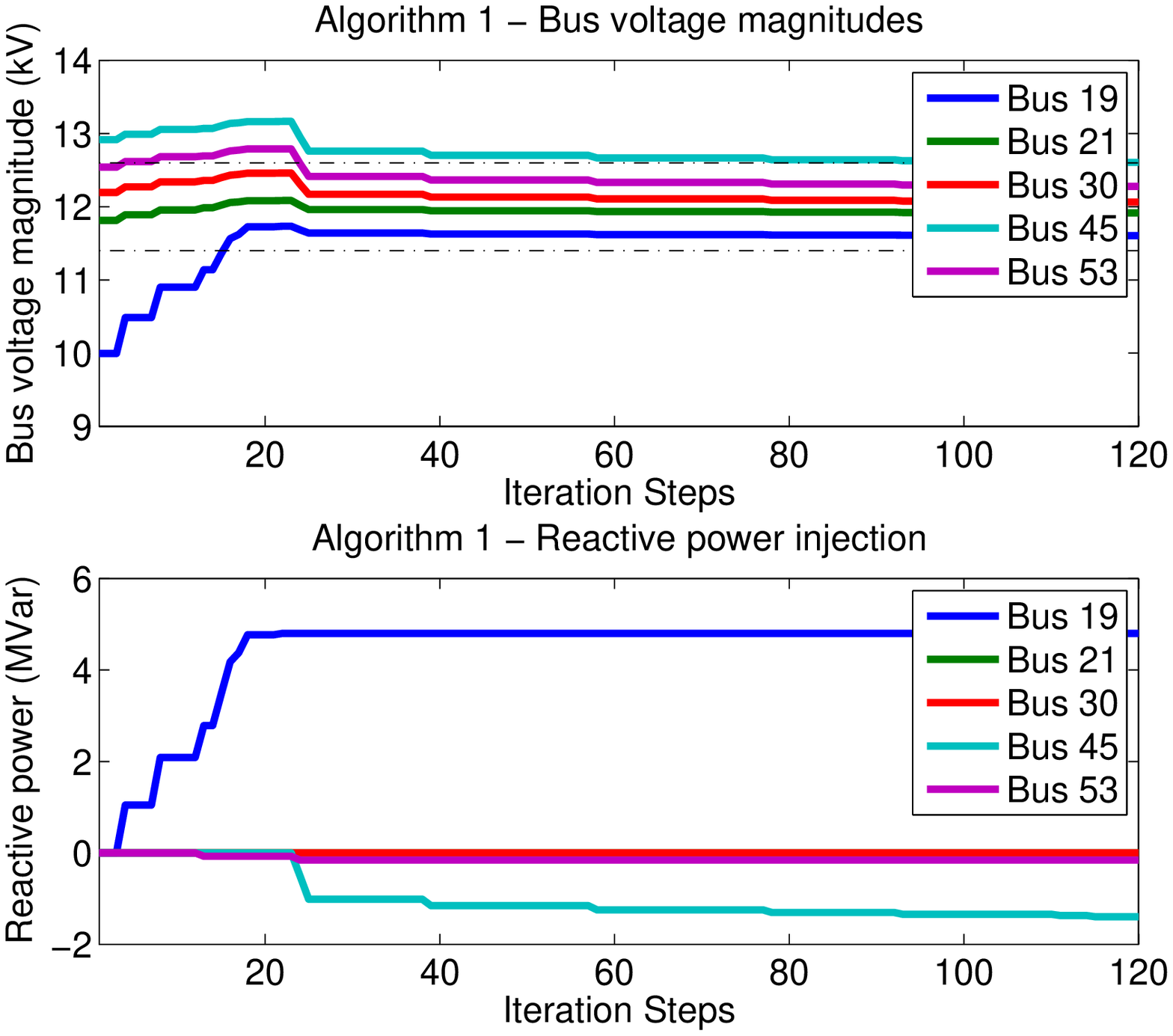}
\caption{Simulation result of case II, algorithm I. The upper figure shows the voltage profile, and the lower figure shows the reactive power injection. Simulation parameters: $T_a = 25$, $T_d = 15$, $\epsilon = 8$.}
\label{fig:asyn_3}
\end{figure}
\begin{figure}[ht]
\centering
\includegraphics[width=3.7in]{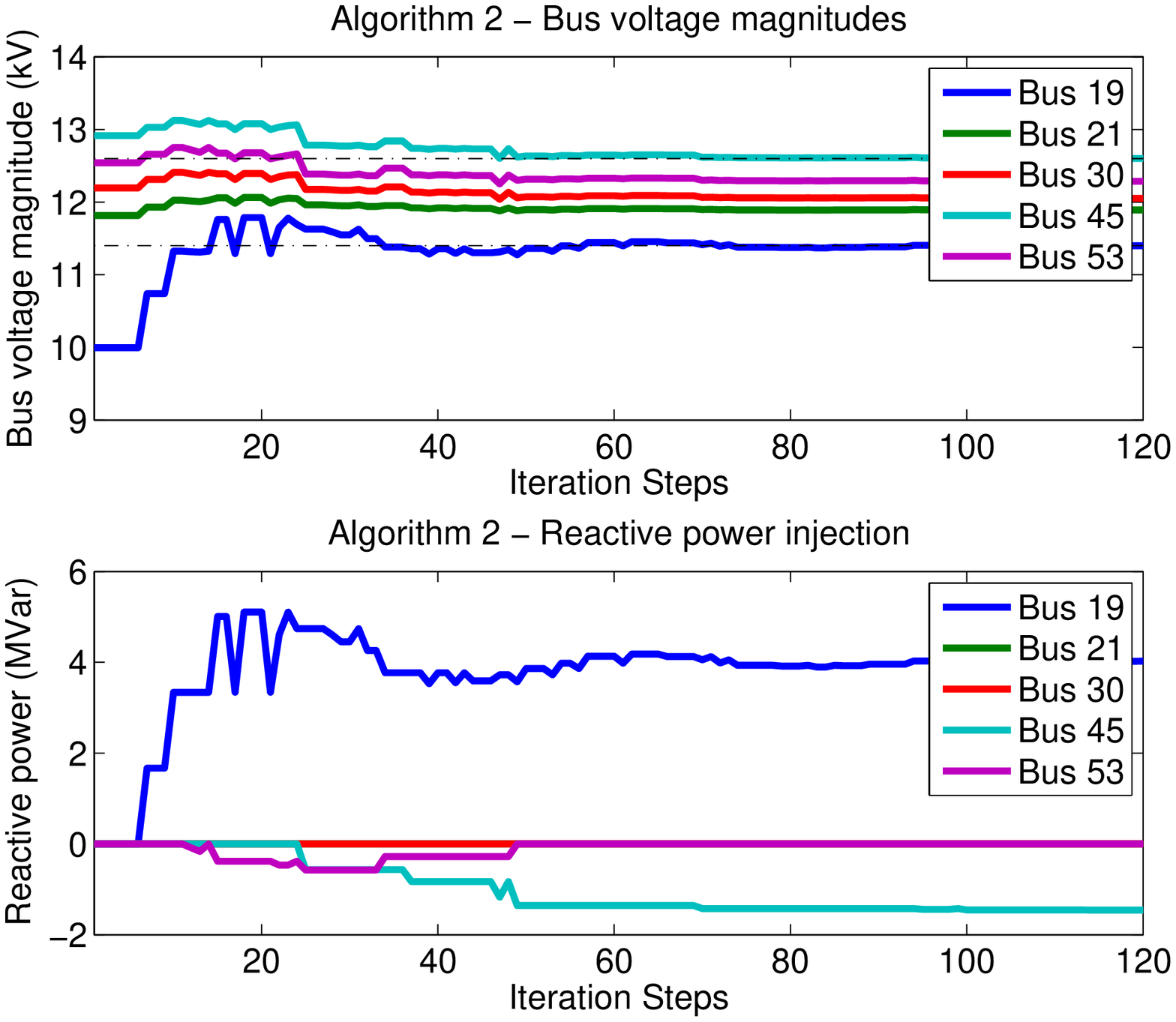}
\caption{Simulation result of case II, algorithm II. The upper figure shows the voltage profile, and the lower figure shows the reactive power injection. Simulation parameters: $T_a = 25$, $T_d = 15$, $\epsilon = 5$.}
\label{fig:asyn_4}
\end{figure}
\begin{figure}[ht]
\centering
\includegraphics[width=3.7in]{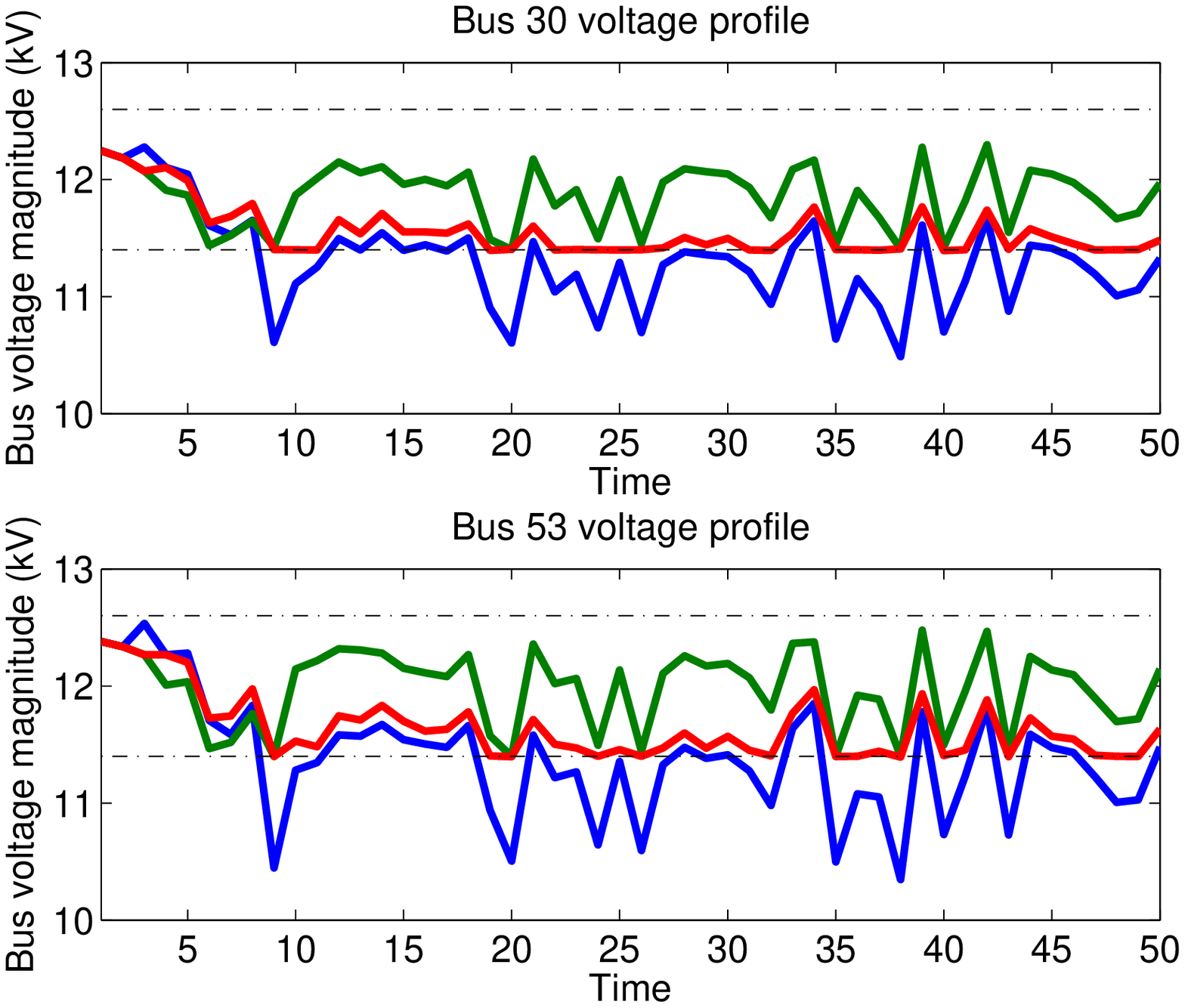}
\caption{Fluctuating load and PV generation. The voltage profile in this figure is the stabilized voltage after each load and generation change. The upper figure shows the voltage profile of bus 30, and the lower figure shows the voltage profile of bus 53. The blue lines are the voltage profile under no reactive power regulation; the green lines algorithm 1; the blue lines algorithm 2.}
\label{fig:fluctuating}
\end{figure}
\section{Conclusion}\label{sec:conclusion}
In this paper, we study how different information structures affect the performance of Volt/Var control. In particular, we first show that using only voltage measurements to decide reactive power injection is insufficient to maintain acceptable voltages. Then we propose two robust and fully decentralized algorithms by adding additional information into the control design. Both of them can maintain acceptable voltages; but one is also able to optimize the reactive power injection in terms of minimizing a cost of the reactive power compensation. Both of the two algorithms use only local measurements and local variables, requiring no communication. They are also robust against heterogeneous update rates and delays. Our results imply that with the aid of right local information, local voltages carry out the whole network information for Volt/Var control.

Regarding future work, there are two additional objectives yet to achieve. First, we have shown that algorithm 2 minimizes a particular form of reactive power cost. However, it would be more desirable if a more general form of cost function can be minimized by the voltage control algorithm. Second, in this paper we do not consider the capacity limit of reactive power injection at each controller, while in practice, we have limit on the reactive power injections, i.e., actuator saturation. It remains as an open question how to design real-time distributed voltage control with provable performance given the capacity limit of reactive power injection. 
\bibliographystyle{IEEETran}
\bibliography{volt_dist}

\begin{thebibliography}{10}
\providecommand{\url}[1]{#1}
\csname url@samestyle\endcsname
\providecommand{\newblock}{\relax}
\providecommand{\bibinfo}[2]{#2}
\providecommand{\BIBentrySTDinterwordspacing}{\spaceskip=0pt\relax}
\providecommand{\BIBentryALTinterwordstretchfactor}{4}
\providecommand{\BIBentryALTinterwordspacing}{\spaceskip=\fontdimen2\font plus
\BIBentryALTinterwordstretchfactor\fontdimen3\font minus
  \fontdimen4\font\relax}
\providecommand{\BIBforeignlanguage}[2]{{%
\expandafter\ifx\csname l@#1\endcsname\relax
\typeout{** WARNING: IEEEtran.bst: No hyphenation pattern has been}%
\typeout{** loaded for the language `#1'. Using the pattern for}%
\typeout{** the default language instead.}%
\else
\language=\csname l@#1\endcsname
\fi
#2}}
\providecommand{\BIBdecl}{\relax}
\BIBdecl

\bibitem{Baran1989a}
M.~E. Baran and F.~F. Wu, ``{Optimal Capacitor Placement on radial distribution
  systems},'' \emph{IEEE Trans. Power Delivery}, vol.~4, no.~1, pp. 725--734,
  1989.

\bibitem{Baran1989b}
------, ``{Optimal Sizing of Capacitors Placed on A Radial Distribution
  System},'' \emph{IEEE Trans. Power Delivery}, vol.~4, no.~1, pp. 735--743,
  1989.

\bibitem{smith2011smart}
J.~Smith, W.~Sunderman, R.~Dugan, and B.~Seal, ``Smart inverter volt/var
  control functions for high penetration of pv on distribution systems,'' in
  \emph{Power Systems Conference and Exposition (PSCE), 2011 IEEE/PES}, 2011,
  pp. 1--6.

\bibitem{turitsyn2011options}
K.~Turitsyn, P.~Sulc, S.~Backhaus, and M.~Chertkov, ``Options for control of
  reactive power by distributed photovoltaic generators,'' \emph{Proceedings of
  the IEEE}, vol.~99, no.~6, pp. 1063--1073, 2011.

\bibitem{aalborg2012droop}
J.~Vasquez, J.~Guerrero, M.~Savaghebi, J.~Eloy-Garcia, and R.~Teodorescu,
  ``Modeling, analysis, and design of stationary-reference-frame
  droop-controlled parallel three-phase voltage source inverters,''
  \emph{Industrial Electronics, IEEE Transactions on}, vol.~60, no.~4, pp.
  1271--1280, April 2013.

\bibitem{ali2012microgrid}
A.~Bidram and A.~Davoudi, ``Hierarchical structure of microgrids control
  system,'' \emph{Smart Grid, IEEE Transactions on}, vol.~3, no.~4, pp.
  1963--1976, Dec 2012.

\bibitem{Baran1989c}
M.~Baran and F.~Wu, ``{Network reconfiguration in distribution systems for loss
  reduction and load balancing},'' \emph{IEEE Trans. on Power Delivery},
  vol.~4, no.~2, pp. 1401--1407, Apr 1989.

\bibitem{IEEE1547}
``P1547.8--recommended practice for establishing methods and procedures that
  provide supplemental support for implementation strategies for expanded use
  of ieee standard 1547,'' \emph{Approved \& Proposed IEEE Smart Grid
  Standards}.

\bibitem{carvalho2013distributed}
P.~Carvalho, L.~Ferreira, and M.~Ilic, ``Distributed energy resources
  integration challenges in low-voltage networks: Voltage control limitations
  and risk of cascading,'' \emph{Sustainable Energy, IEEE Transactions on},
  vol.~4, no.~1, pp. 82--88, 2013.

\bibitem{bolognani2013distributed}
S.~Bolognani, G.~Cavraro, R.~Carli, and S.~Zampieri, ``A distributed feedback
  control strategy for optimal reactive power flow with voltage constraints,''
  \emph{arXiv preprint arXiv:1303.7173}, 2013.

\bibitem{farivar2013equilibrium}
M.~Farivar, L.~Chen, and S.~Low, ``Equilibrium and dynamics of local voltage
  control in distribution systems,'' 2013.

\bibitem{robbins2013two}
B.~A. Robbins, C.~N. Hadjicostis, and A.~D. Dom{\'\i}nguez-Garc{\'\i}a, ``A
  two-stage distributed architecture for voltage control in power distribution
  systems,'' \emph{Power Systems, IEEE Transactions on}, vol.~28, no.~2, pp.
  1470--1482, 2013.

\bibitem{zhang2013local}
B.~Zhang, A.~D. Dom{\'\i}nguez-Garc{\'\i}a, and D.~Tse, ``A local control
  approach to voltage regulation in distribution networks,'' in \emph{North
  American Power Symposium (NAPS), 2013}.\hskip 1em plus 0.5em minus
  0.4em\relax IEEE, 2013, pp. 1--6.

\bibitem{kam2014stability}
A.~Kam and J.~Simonelli, ``Stability of distributed, asynchronous var-based
  closed-loop voltage control systems,'' in \emph{PES General Meeting |
  Conference Exposition, 2014 IEEE}, July 2014, pp. 1--5.

\bibitem{savaghebi2012secondary}
M.~Savaghebi, A.~Jalilian, J.~Vasquez, and J.~Guerrero, ``Secondary control
  scheme for voltage unbalance compensation in an islanded droop-controlled
  microgrid,'' \emph{Smart Grid, IEEE Transactions on}, vol.~3, no.~2, pp.
  797--807, June 2012.

\bibitem{aalborg2013secondary}
Q.~Shafiee, J.~Guerrero, and J.~Vasquez, ``Distributed secondary control for
  islanded microgrids - a novel approach,'' \emph{Power Electronics, IEEE
  Transactions on}, vol.~29, no.~2, pp. 1018--1031, Feb 2014.

\bibitem{rocabert2012microgrid}
J.~Rocabert, A.~Luna, F.~Blaabjerg, and P.~Rodríguez, ``Control of power
  converters in ac microgrids,'' \emph{Power Electronics, IEEE Transactions
  on}, vol.~27, no.~11, pp. 4734--4749, Nov 2012.

\bibitem{vandoorn2012microgrid}
T.~Vandoorn, J.~De~Kooning, B.~Meersman, and L.~Vandevelde,
  ``Communication-based secondary control in microgrids with voltage-based
  droop control,'' in \emph{Transmission and Distribution Conference and
  Exposition (T D), 2012 IEEE PES}, May 2012, pp. 1--6.

\bibitem{hu2011droop}
J.~F. Hu, J.~G. Zhu, and G.~Platt, ``A droop control strategy of
  parallel-inverter-based microgrid,'' in \emph{Applied Superconductivity and
  Electromagnetic Devices (ASEMD), 2011 International Conference on}, Dec 2011,
  pp. 188--191.

\bibitem{Farivar-2012-BFM-TPS}
M.~Farivar and S.~Low, ``Branch flow model: Relaxations and convexification,''
  \emph{arXiv:1204.4865v2}, 2012.

\bibitem{Gan2012branch}
L.~Gan, N.~Li, U.~Topcu, and S.~Low, ``Branch flow model for radial networks:
  convex relaxation,'' in \emph{Proceedings of the 51st IEEE Conference on
  Decision and Control}, 2012.

\bibitem{lijun2013voltage}
M.~Farivar, L.~Chen, and S.~Low, ``Equilibrium and dynamics of local voltage
  control in distribution systems,'' in \emph{Decision and Control (CDC), 2013
  IEEE 52nd Annual Conference on}, 2013, pp. 4329--4334.

\bibitem{low1999optimization}
S.~H. Low and D.~E. Lapsley, ``Optimization flow control—i: basic algorithm
  and convergence,'' \emph{IEEE/ACM Transactions on Networking (TON)}, vol.~7,
  no.~6, pp. 861--874, 1999.

\bibitem{rudin1964principles}
W.~Rudin, \emph{Principles of mathematical analysis}.\hskip 1em plus 0.5em
  minus 0.4em\relax McGraw-Hill New York, 1964, vol.~3.

\bibitem{bertsekas1999nonlinear}
D.~P. Bertsekas, ``Nonlinear programming,'' 1999.

\bibitem{Farivar-2012-VVC-PES}
M.~Farivar, R.~Neal, C.~Clarke, and S.~Low, ``Optimal inverter var control in
  distribution systems with high pv penetration,'' in \emph{IEEE Power and
  Energy Society General Meeting}, San Diego, CA, July 2012.

\bibitem{5491276}
R.~Zimmerman, C.~Murillo-Sánchez, and R.~Thomas, ``Matpower: Steady-state
  operations, planning, and analysis tools for power systems research and
  education,'' \emph{Power Systems, IEEE Transactions on}, vol.~26, no.~1, pp.
  12--19, Feb 2011.

\end{thebibliography}

\begin{appendix}
\subsection{Lemma for Theorem~\ref{thm:alg2-syn}}
\begin{lemma} \label{prop:hessianD}
Regarding the Hessian matrix of $D$ we have (i)$\nabla D^2 = -A$ where $ A = \left( \begin{array}{ccc}
X & -X \\
-X & X  \end{array} \right) $ and (ii)
(ii)$\sigma_{\max}(A) = 2\sigma_{\max}(X)$.
\end{lemma}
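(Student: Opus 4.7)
For part (i), my plan is to compute the gradient of $D$ component-by-component and then differentiate again to read off the Hessian. Setting $y := \underline{\lambda}-\bar{\lambda}$, the only nonlinear term in $D$ is $-\tfrac{1}{2} y^T X y$; since $\partial y/\partial \bar{\lambda} = -I$ and $\partial y/\partial \underline{\lambda} = I$, the chain rule gives $\nabla_{\bar{\lambda}} D = X(\underline{\lambda}-\bar{\lambda}) + (v^{par}-\bar{v})$ and $\nabla_{\underline{\lambda}} D = -X(\underline{\lambda}-\bar{\lambda}) + (\underline{v}-v^{par})$. A second differentiation then yields the four $n\times n$ blocks $\partial^2 D/\partial\bar{\lambda}^2 = -X$, $\partial^2 D/\partial\underline{\lambda}^2 = -X$, and the two off-diagonal blocks equal to $+X$. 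Assembling these blocks gives exactly $-A$.

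For part (ii), I would first observe that $A$ is symmetric positive semidefinite, because for any $w=(x^T,y^T)^T$ one has the clean identity
\begin{equation*}
w^T A w = x^T X x - 2 x^T X y + y^T X y = (x-y)^T X (x-y) \ge 0.
\end{equation*}
So $\sigma_{\max}(A)$ equals the largest eigenvalue of $A$ and is given by a Rayleigh quotient. The most efficient way to finish is to factor $A$ as $A = C C^T$ with $C = \begin{pmatrix} X^{1/2} \\ -X^{1/2} \end{pmatrix}$ (using that $X$ is positive definite, hence has a symmetric square root). Then $C^T C = 2X$, and since $C C^T$ and $C^T C$ share the same nonzero eigenvalues, the largest eigenvalue of $A$ equals the largest eigenvalue of $2X$, i.e.\ $2\sigma_{\max}(X)$.

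As an alternative route that avoids $X^{1/2}$, one can finish from the identity $w^T A w = (x-y)^T X(x-y)$ directly: writing $z = x-y$ and minimizing $\|x\|^2+\|y\|^2$ under the constraint $x-y=z$ gives the minimum $\|z\|^2/2$ attained at $x=z/2,\ y=-z/2$. Hence
\begin{equation*}
\sigma_{\max}(A) = \sup_{(x,y)\neq 0} \frac{(x-y)^T X(x-y)}{\|x\|^2+\|y\|^2} = \sup_{z\neq 0} \frac{2\, z^T X z}{\|z\|^2} = 2\sigma_{\max}(X).
\end{equation*}

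Neither part poses a real obstacle: (i) is a mechanical differentiation, and the only subtlety in (ii) is recognizing the rank-revealing factorization (or, equivalently, the change of variables $z=x-y$) that collapses the block quadratic form to $z^T X z$ and produces the factor of two. I would present the factorization proof in the body because it makes both the eigenvalue structure of $A$ and the appearance of the factor $2$ transparent.
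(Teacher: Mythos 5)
Your proof is correct, and part (ii) takes a genuinely different route from the paper's. The paper works directly with the singular-value definition: it bounds $\Vert A u\Vert_2^2 = 2\Vert Xa - Xb\Vert_2^2 \leq 4(\Vert Xa\Vert_2^2 + \Vert Xb\Vert_2^2) \leq 4\sigma_{\max}(X)^2$ for unit $u=(a,b)^T$, and then exhibits the achieving vector $a=-b=p/\sqrt{2}$ with $p$ a top singular vector of $X$. You instead exploit the symmetric positive semidefinite structure of $A$: the identity $w^T A w = (x-y)^T X (x-y)$ together with the factorization $A = CC^T$, $C^T C = 2X$ (or equivalently the Rayleigh-quotient computation with $z = x-y$) collapses the problem to the spectrum of $2X$. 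Your argument is more structural and explains \emph{why} the factor of $2$ appears (it is the rank-deficiency of $A$ concentrating all the energy on the subspace $x=-y$), at the cost of invoking $X^{1/2}$ and the fact that $CC^T$ and $C^TC$ share nonzero eigenvalues, and of implicitly using that $\sigma_{\max}=\lambda_{\max}$ for symmetric PSD matrices; the paper's argument is more elementary and self-contained, needing only the triangle inequality and an explicit extremal vector. Both are complete; part (i) is the same mechanical differentiation in either case (the paper simply declares it easy to verify, and the gradient formula you derive matches the one the paper records later in the proof of Lemma~\ref{lem:Dmuerror}).
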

\begin{proof}
It's easy to verify (i). For (ii), let $u = (a, b)^T$ where $a$,$b \in \mathbb{R}^n$ and $\Vert u \Vert_2 = 1$. Observe that $\Vert A u \Vert_2^2 = 2 \Vert X a - X b \Vert_2^2 \leq 4 (\Vert Xa \Vert_2^2 + \Vert Xb \Vert_2^2) \leq 4 \sigma_{\max}(X)^2(\Vert a \Vert_2^2 + \Vert b \Vert_2^2) = 4 \sigma_{\max}(X)^2$. Therefore, $\sigma_{\max}(A) \leq 2 \sigma_{\max}(X)$. Let $p$ be the vector s.t. $\Vert p \Vert_2 = 1$ and $\Vert X p \Vert_2 = \sigma_{\max}(X)$. Let $a = -b = p/\sqrt{2}$. Then $\Vert u\Vert_2 =1$ and $\Vert A u \Vert_2 = \Vert \sqrt{2}(X p,X p)^T \Vert_2 = 2\sigma_{\max}(X)$. Hence $\sigma_{\max}(A) = 2\sigma_{\max}(X)$.
\end{proof}

\subsection{Proof for Theorem~\ref{thm:asyn}}
First we introduce a few notations. Define $\lambda(t)=[\bar{\lambda}(t)^T,\underline{\lambda}(t)^T]^T$, $\bar{\pi}(t) = \bar{\lambda}(t) -\bar{\lambda}(t-1)$, $\underline{\pi}(t) = \underline{\lambda}(t) - \underline{\lambda}(t-1)$, and $\pi(t)=[\bar{\pi}(t)^T,\underline{\pi}(t)^T]^T$. Define $\mu(t)=[\bar{\mu}(t)^T, \underline{\mu}(t)^T]^T$ where $[\bar{\mu}(t)]_i = v_i(t-\tau_i(t))-\bar{v}_i$ and $[\underline{\mu}(t)]_i = \underline{v}_i-v_i(t-\tau_i(t))$ for $i = 1,...,n$. 
\begin{lemma} \label{lem:nablapi}
For each $t$, we have
$$\mu(t)^T \pi(t) \geq \frac{1}{\epsilon} \left\Vert \pi(t) \right\Vert^2.$$
\end{lemma}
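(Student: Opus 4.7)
\medskip
\noindent\textbf{Proof proposal.} The plan is to reduce the lemma to a one-dimensional scalar statement about the projection $[\,\cdot\,]^+$, applied coordinate by coordinate. First I would observe that $\mu(t)^T \pi(t)$ and $\|\pi(t)\|^2$ both decompose additively over the $2n$ coordinates corresponding to the $\bar\lambda_i$ and $\underline\lambda_i$ variables, and the coordinates are decoupled because the $i$th update rule in (\ref{eq:asyn_update}a)--(\ref{eq:asyn_update}b) involves only $\bar\lambda_i$ (resp.\ $\underline\lambda_i$) and the scalar $v_i(t-\tau_i(t))$. So it suffices to prove the following scalar inequality: if $\lambda_{\mathrm{new}} = [\lambda_{\mathrm{old}} + \epsilon \mu]^+$ with $\lambda_{\mathrm{old}} \ge 0$, then
\begin{equation*}
(\lambda_{\mathrm{new}} - \lambda_{\mathrm{old}})\,\mu \;\ge\; \tfrac{1}{\epsilon}(\lambda_{\mathrm{new}} - \lambda_{\mathrm{old}})^2.
\end{equation*}

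Next I would dispose of the easy cases. When $t\notin T_i$, the corresponding coordinate of $\pi(t)$ is zero by (\ref{eq:asyn_noupdate}), so that coordinate contributes $0$ to both sides and the inequality holds trivially. Inductively, $\lambda_{\mathrm{old}} \ge 0$ is preserved since $[\,\cdot\,]^+ \ge 0$, so the hypothesis of the scalar claim is always valid when an update does happen.

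The main content is the two-case analysis for an active update. In the unprojected case $\lambda_{\mathrm{old}} + \epsilon \mu \ge 0$, we have $\lambda_{\mathrm{new}} - \lambda_{\mathrm{old}} = \epsilon \mu$, which gives
\begin{equation*}
(\lambda_{\mathrm{new}} - \lambda_{\mathrm{old}})\,\mu \;=\; \epsilon \mu^2 \;=\; \tfrac{1}{\epsilon}(\lambda_{\mathrm{new}} - \lambda_{\mathrm{old}})^2,
\end{equation*}
so the inequality holds with equality. In the clipped case $\lambda_{\mathrm{old}} + \epsilon \mu < 0$, we have $\lambda_{\mathrm{new}} = 0$ and therefore $\lambda_{\mathrm{new}} - \lambda_{\mathrm{old}} = -\lambda_{\mathrm{old}} \le 0$; the strict inequality $\epsilon\mu < -\lambda_{\mathrm{old}}$ multiplied by the nonnegative number $\lambda_{\mathrm{old}}/\epsilon$ and rearranged yields $(-\lambda_{\mathrm{old}})\mu \ge \tfrac{1}{\epsilon}\lambda_{\mathrm{old}}^2$, which is exactly the desired bound.

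Applying this scalar claim to $(\bar\lambda_i(t-1),\bar\lambda_i(t),\bar\mu_i(t))$ and to $(\underline\lambda_i(t-1),\underline\lambda_i(t),\underline\mu_i(t))$ and summing over $i=1,\dots,n$ yields the lemma. I do not anticipate a genuine obstacle here: the only subtlety is remembering that the induction preserves $\lambda \ge 0$ and that the projection step can only decrease $\lambda_{\mathrm{old}} + \epsilon\mu$ in absolute terms relative to the update direction, which is precisely what makes the clipped case consistent with (not tighter than) the unclipped identity.
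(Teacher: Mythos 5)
Your proposal is correct and follows essentially the same route as the paper: both reduce the claim to a per-coordinate scalar inequality about the update $\lambda_{\mathrm{new}} = [\lambda_{\mathrm{old}} + \epsilon\mu]^+$, handle the non-updating coordinates trivially, and sum over the $2n$ components. The only difference is that you verify the scalar inequality by an explicit clipped/unclipped case analysis (correctly, including the $\lambda_{\mathrm{old}}=0$ subcase), whereas the paper obtains it in one line by applying the Projection Theorem to the projection onto $[0,\infty)$ with the feasible point $\lambda_{\mathrm{old}}$.
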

\begin{proof}
Denote each component of $\pi(t)$ (and $\lambda(t)$, $\mu(t)$) by subscript $l$. For component $l$, if the update rule is (\ref{eq:asyn_noupdate}), apparently we have $[\mu(t)]_l [\pi(t)]_l \geq \frac{1}{\epsilon} [\pi(t)]_l^2$ because both sides of the inequality is $0$.
If the update rule is (\ref{eq:asyn_update}), we have $[\lambda(t)]_l = [[\lambda(t-1)]_l + \epsilon [\mu(t)]_l]^+$. Apply Projection Theorem and therefore $$([\lambda(t-1)]_l+\epsilon [\mu(t)]_l - [\lambda(t)]_l)([\lambda(t-1)]_l - [\lambda(t)]_l) \leq 0 $$
which yields $[\mu(t)]_l [\pi(t)]_l \geq \frac{1}{\epsilon} [\pi(t)]_l^2$.
Summing this inequality over all components $l$ will lead to the lemma.
\end{proof}
\begin{lemma} \label{lem:Dmuerror} The following inequalities hold:

(i) $$\left\Vert\nabla D(\lambda(t-1)) - \mu(t)\right\Vert \leq 2 \Vert X \Vert_F \sum_{t'=t-2T_d-T_a}^{t-1}\left\Vert\pi(t')\right\Vert$$

(ii) $$\Vert v(t)-v(t-1)\Vert \leq  \sigma_{\max}(X) \sum_{t' = t-T_a-T_d}^{t}( \Vert\bar{\pi}(t')\Vert + \Vert\underline{\pi}(t')\Vert)$$
\end{lemma}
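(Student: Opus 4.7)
The plan is to prove both inequalities by expressing the quantities of interest as telescoping sums of the $\pi$-increments, and then aggregating across buses by Cauchy--Schwarz and Minkowski.

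For part (i), I would first compute $\nabla D(\lambda(t-1))$ explicitly from (\ref{eq:dual}). Writing $\tilde{\lambda}(s):=\underline{\lambda}(s)-\bar{\lambda}(s)$ and $\widehat{v}(s):=X\tilde{\lambda}(s)+v^{par}$ (the ``delay--free'' voltage that would result if $q$ were synced with the current $\lambda$), one gets $\nabla_{\bar\lambda} D(\lambda(t-1)) = \widehat{v}(t-1)-\bar v$ and $\nabla_{\underline\lambda} D(\lambda(t-1)) = \underline v - \widehat{v}(t-1)$. On the other hand, $\bar\mu(t)_i = v_i(t-\tau_i(t))-\bar v_i$ and $\underline\mu(t)_i = \underline v_i - v_i(t-\tau_i(t))$, so the two halves of $\nabla D(\lambda(t-1))-\mu(t)$ are $\pm(\widehat v(t-1)-v^{\rm del}(t))$, where $v^{\rm del}(t)_i := v_i(t-\tau_i(t))$. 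Hence $\|\nabla D(\lambda(t-1))-\mu(t)\|=\sqrt{2}\,\|\widehat v(t-1)-v^{\rm del}(t)\|$, and the problem reduces to bounding the latter.

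Next, since $v_i(t-\tau_i(t))=X_i q(t-1-\tau_i(t))+v_i^{par}$, one has $\widehat v_i(t-1)-v_i(t-\tau_i(t))=X_i\bigl(\tilde\lambda(t-1)-q(t-1-\tau_i(t))\bigr)$. For each $j$, the control update rule (\ref{eq:asyn_update}c) gives $q_j(t-1-\tau_i(t))=\tilde\lambda_j(r_j^{(i)})$ for an effective time $r_j^{(i)}$ determined by $j$'s last update before $t-1-\tau_i(t)$ (within $T_a$ by Assumption~\ref{assump:2}) and the corresponding actuation delay $\tau'_j(\cdot)\le T_d$; together with $\tau_i(t)\le T_d$ this forces $r_j^{(i)}\in[t-2T_d-T_a,\,t-1]$. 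Telescoping, $\tilde\lambda_j(t-1)-\tilde\lambda_j(r_j^{(i)})=\sum_{t'=r_j^{(i)}+1}^{t-1}(\underline\pi_j(t')-\bar\pi_j(t'))$, whose absolute value is at most $\sum_{t'=t-2T_d-T_a}^{t-1}(|\underline\pi_j(t')|+|\bar\pi_j(t')|)$. Applying Cauchy--Schwarz in the $j$-sum yields a factor $\|X_i\|$, and Minkowski in the $i$-sum then gives $\|\widehat v(t-1)-v^{\rm del}(t)\|\le \|X\|_F\sum_{t'=t-2T_d-T_a}^{t-1}(\|\underline\pi(t')\|+\|\bar\pi(t')\|)$. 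Using $\|\underline\pi(t')\|+\|\bar\pi(t')\|\le \sqrt 2\,\|\pi(t')\|$ and the earlier $\sqrt 2$ combines to the claimed constant $2\|X\|_F$.

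For part (ii), I would use the linear model directly: $v(t)-v(t-1)=X(q(t-1)-q(t-2))$, so $\|v(t)-v(t-1)\|\le \sigma_{\max}(X)\,\|q(t-1)-q(t-2)\|$. Componentwise, $q_j(t-1)=\tilde\lambda_j(r_j^{(t-1)})$ and $q_j(t-2)=\tilde\lambda_j(r_j^{(t-2)})$ with both effective $\lambda$-times lying in $[t-T_a-T_d,\,t]$ by the same update/delay bookkeeping as above. Telescoping each difference and then invoking Minkowski's inequality to exchange $\ell^2$-norm over $j$ with the sum over $t'$ produces $\|q(t-1)-q(t-2)\|\le \sum_{t'=t-T_a-T_d}^{t}(\|\bar\pi(t')\|+\|\underline\pi(t')\|)$, which combined with the $\sigma_{\max}(X)$ factor gives the claim.

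The main obstacle is the careful bookkeeping of the three layers of delay---the measurement delay $\tau_i(t)$, the inter-update gap of up to $T_a$, and the actuation delay $\tau'_j(\cdot)\le T_d$---to establish the correct windows $[t-2T_d-T_a,\,t-1]$ in (i) and $[t-T_a-T_d,\,t]$ in (ii). Once those windows are pinned down, the rest is a standard telescoping/Cauchy--Schwarz/Minkowski computation.
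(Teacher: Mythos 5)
Your proposal is correct and follows essentially the same route as the paper's own proof: compute $\nabla D$ explicitly, reduce $\nabla D(\lambda(t-1))-\mu(t)$ to $X_i^T\bigl(\underline{\lambda}(t-1)-\bar{\lambda}(t-1)-q(t-1-\tau_i(t))\bigr)$, telescope the $\lambda$-increments over the window $[t-2T_d-T_a,\,t-1]$ dictated by the measurement delay, actuation delay, and inter-update gap, and aggregate with Cauchy--Schwarz plus $a+b\le\sqrt{2(a^2+b^2)}$ to obtain the constant $2\Vert X\Vert_F$, with the analogous (shorter) window argument for part (ii). The only differences are cosmetic: you factor the $\sqrt{2}$ out of the $\pm$ block structure up front rather than carrying it componentwise, and in (ii) you correctly use $v(t)-v(t-1)=X(q(t-1)-q(t-2))$ where the paper writes $q(t)-q(t-1)$, a harmless index slip in either direction.
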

\begin{proof}
(i) Through simple calculation, $$\nabla D(\lambda) =  \left( \begin{array}{ccc}
X(\underline{\lambda} - \bar{\lambda}) + v^{par} - \bar{v}   \\
  -[X(\underline{\lambda} - \bar{\lambda}) + v^{par}] + \underline{v}  \end{array} \right) $$
Therefore, $\nabla D(\lambda(t-1)) - \mu(t)$ is a $2 n$-dimension vector with $[\nabla D(\lambda(t-1)) - \mu(t)]_i = X_i^T(\underline{\lambda}(t-1) - \bar{\lambda}(t-1)) + v_i^{par}  - v_i(t-\tau_i(t))$ and $[\nabla D(\lambda(t-1)) - \mu(t)]_{n+i} =  - [\nabla D(\lambda(t-1)) - \mu(t)]_i$ for $i = 1,...,n$, where $X_i$ denotes the $i$th row of $X$. Then,
\begin{align} \label{eq:nabla_mu_i}
&|[\nabla D(\lambda(t-1)) - \mu(t)]_i| \nonumber \\
=&  |X_i^T(\underline{\lambda}(t-1) - \bar{\lambda}(t-1)) - X_i^T q(t-1-\tau_i(t))| \nonumber\\
                       \leq&  \Vert X_i \Vert \Vert \underline{\lambda}(t-1) - \bar{\lambda}(t-1) - q(t-1-\tau_i(t)) \Vert \nonumber\\
                       \leq &  \Vert X_i\Vert \sum_{t' = t - 2 T_d - T_a}^{t-1} (\Vert\underline{\pi}(t')\Vert + \Vert\bar{\pi}(t')\Vert) \nonumber\\
                       \leq & \sqrt{2}\Vert X_i\Vert \sum_{t' = t - 2 T_d - T_a}^{t-1} \Vert\pi(t')\Vert
\end{align}
The last inequality follows from the fact that for any non-negative real number $a$ and $b$ we have $a + b \leq \sqrt{2(a^2+b^2)}$. The second last inequality follows from the fact that for the $j$th component of $\underline{\lambda}(t-1) - \bar{\lambda}(t-1) - q(t-1-\tau_i(t))$ we have
\begin{align*}
&\underline{\lambda}_j(t-1) - \bar{\lambda}_j(t-1) - q_j(t-1-\tau_i(t)) \\
=& (\underline{\lambda}_j(t-1) -\underline{\lambda}_j(t-1-\tau_i(t)-\zeta_j) )\\
                    & - (\bar{\lambda}_j(t-1) - \bar{\lambda}_j(t-1-\tau_i(t)-\zeta_j)) \\
                   = & \sum_{t' = t-\tau_i(t)-\zeta_j}^{t-1} (\underline{\pi}_j(t') - \bar{\pi}_j(t'))
\end{align*}
where $\zeta_j$ is a result of the actuation delay and the non-update time steps caused by (\ref{eq:asyn_noupdate}). Define a n-dimensional vector $\hat{\underline{\pi}}(t')$ whose $j$th component is $\underline{\pi}_j(t')$ when $t-\tau_i(t)-\zeta_j \leq t' \leq t-1$ and $0$ for other $t'$.  Similarly define $\hat{\bar{\pi}}(t')$ whose $j$th component is $\bar{\pi}_j(t')$ when $t-\tau_i(t)-\zeta_j \leq t' \leq t-1$ and $0$ for other $t'$. By the definition we have $\Vert \hat{\underline{\pi}}(t') \Vert \leq \Vert \underline{\pi}(t') \Vert$ and $\Vert \hat{\bar{\pi}}(t') \Vert \leq \Vert \bar{\pi}(t') \Vert$. Observe that $\tau_i(t) + \zeta_j$ is upper bounded by $2T_d +T_a$ by Assumption \ref{assump:2}, we have
\begin{align*}
&\Vert\underline{\lambda}(t-1) - \bar{\lambda}(t-1) - q(t-1-\tau_i(t))\Vert \\
 =&  \Vert\sum_{t' = t - 2T_d - T_a }^{t-1}  (\hat{\underline{\pi}}(t') - \hat{\bar{\pi}}(t'))\Vert \\
\leq &\sum_{t' = t - 2T_d - T_a }^{t-1} (\Vert \hat{\underline{\pi}}(t') \Vert + \Vert \hat{\bar{\pi}}(t') \Vert ) \\
\leq &\sum_{t' = t - 2T_d - T_a }^{t-1} (\Vert \underline{\pi}(t') \Vert + \Vert \bar{\pi}(t') \Vert )
\end{align*}
which leads to the second last inequality of (\ref{eq:nabla_mu_i}).

Sum (\ref{eq:nabla_mu_i}) over $i$ and $n+i$ where $i = 1,...,n$, we have
\begin{equation*}
\Vert \nabla D(\lambda(t-1)) - \mu(t)\Vert \leq 2 \Vert X \Vert_F \sum_{t'=t-2T_d - T_a}^{t-1}\Vert\pi(t')\Vert
\end{equation*}
(ii) By some similar arguments in the derivation of the second last inequality in (\ref{eq:nabla_mu_i}), we have
\begin{align*}
\Vert v(t)-v(t-1)\Vert &\leq  \sigma_{\max}(X) \Vert q(t)-q(t-1)\Vert \\ 
                       &\leq  \sigma_{\max}(X) \sum_{t' = t-T_a-T_d}^{t}( \Vert\bar{\pi}(t')\Vert + \Vert\underline{\pi}(t')\Vert)
\end{align*}
\end{proof}
\begin{lemma} \label{lem:Dinqual}
There exists a positive constant $A$ s.t. when $\epsilon < \frac{1}{A}$, $D(\lambda(t))$ is bounded, and $$||\pi(t)\Vert \rightarrow 0 (t \rightarrow \infty). $$ 
\end{lemma}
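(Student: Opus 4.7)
The plan is to show that $D(\lambda(t))$ is essentially non-decreasing along the trajectory up to error terms caused by the delays, and then to use the fact that the concave quadratic $D$ is bounded above by the finite dual optimum $D^*$ to extract square-summability of $\pi(t)$. This is the standard Lyapunov-type argument used for delayed/asynchronous dual gradient ascent, adapted to the two-sided structure of $\lambda = (\bar\lambda, \underline\lambda)$.

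First I would write a second-order Taylor expansion of $D$ between $\lambda(t-1)$ and $\lambda(t) = \lambda(t-1) + \pi(t)$. Since $\nabla^2 D \equiv -A$ by Lemma~\ref{prop:hessianD}(i) and $\sigma_{\max}(A) = 2\sigma_{\max}(X)$ by Lemma~\ref{prop:hessianD}(ii), this yields
\[
D(\lambda(t)) \,\geq\, D(\lambda(t-1)) + \nabla D(\lambda(t-1))^T \pi(t) - \sigma_{\max}(X)\,\Vert \pi(t)\Vert^2 .
\]
Next I would split $\nabla D(\lambda(t-1))^T \pi(t) = \mu(t)^T\pi(t) + (\nabla D(\lambda(t-1)) - \mu(t))^T \pi(t)$. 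Lemma~\ref{lem:nablapi} lower-bounds the first piece by $\tfrac{1}{\epsilon}\Vert\pi(t)\Vert^2$, and Cauchy–Schwarz with Lemma~\ref{lem:Dmuerror}(i) lower-bounds the second piece by $-\,2\Vert X\Vert_F \sum_{t' = t-2T_d-T_a}^{t-1} \Vert\pi(t')\Vert \Vert\pi(t)\Vert$. Applying Young's inequality $ab \leq \tfrac{1}{2}(a^2+b^2)$ to each product converts the cross terms into a sum of $\Vert\pi(t)\Vert^2$ and $\Vert\pi(t')\Vert^2$ contributions.

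Summing the resulting one-step inequality from $t=1$ to $T$ and interchanging the order of summation in the double sum, each index appears in the inner window at most $(2T_d+T_a)$ times on each of the two slots, so
\[
D(\lambda(T)) - D(\lambda(0)) \,\geq\, \Bigl[\tfrac{1}{\epsilon} - \sigma_{\max}(X) - 2(2T_d+T_a)\Vert X\Vert_F\Bigr] \sum_{t=1}^{T} \Vert\pi(t)\Vert^2 .
\]
Setting $A := \sigma_{\max}(X) + 2(2T_d+T_a)\Vert X\Vert_F$, the bracket is strictly positive whenever $\epsilon < 1/A$, which matches the threshold already announced in Theorem~\ref{thm:asyn}. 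Because the primal (\ref{eq:min_q}) is a feasible convex QP with a positive definite $X$ and therefore a finite minimum, strong duality gives a finite dual optimum $D^*$, so $D(\lambda(T)) \leq D^*$ for every $T$. The inequality above then forces both $D(\lambda(t))$ to stay in the bounded interval $[D(\lambda(0)), D^*]$ and $\sum_{t=1}^\infty \Vert\pi(t)\Vert^2 < \infty$, which in particular implies $\Vert\pi(t)\Vert \to 0$.

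I expect the main obstacle to be the bookkeeping in the double summation: in particular, correctly extracting the factor $2(2T_d+T_a)$ and no larger, which requires careful accounting of how many $t$-indices the window $[t-2T_d-T_a, t-1]$ covers a fixed $t'$. A smaller wrinkle is justifying that $D^*$ is finite; this should follow quickly from strong duality applied to (\ref{eq:min_q}), but it is worth stating since the subsequent convergence arguments in the proof of Theorem~\ref{thm:asyn} all hinge on $\Vert\pi(t)\Vert \to 0$ here.
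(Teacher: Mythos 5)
Your proposal is correct and follows essentially the same route as the paper's proof: the same second-order Taylor expansion with $\nabla^2 D = -A$, the same split of $\nabla D(\lambda(t-1))^T\pi(t)$ into $\mu(t)^T\pi(t)$ plus a delay-error term bounded via Lemma~\ref{lem:nablapi} and Lemma~\ref{lem:Dmuerror}(i), Young's inequality on the cross terms, and the telescoping sum yielding the same constant $A = \sigma_{\max}(X) + 2(2T_d+T_a)\Vert X\Vert_F$. The only (welcome) addition is that you make explicit, via strong duality for the feasible convex QP, why $D$ is bounded above, a point the paper only asserts parenthetically.
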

\begin{proof}
Apply Proposition~\ref{prop:hessianD}, Lemma~\ref{lem:nablapi}(i) and Lemma~\ref{lem:Dmuerror} to the second order Taylor expansion of $D(\lambda(t))$, we have
\begin{align} \label{eq:D_inc}
D(\lambda(t))  &=   D(\lambda(t-1)) + \nabla D(\lambda(t-1))^T \pi(t) \nonumber \\ 
				& + \frac{1}{2} \pi^T(t) \nabla^2 D\pi(t) \nonumber\\
				 &= D(\lambda(t-1)) + (\nabla D(\lambda(t-1))-\mu(t))^T\pi(t) \nonumber\\
				 & + \mu(t)^T\pi(t) + \frac{1}{2} \pi^T(t) \nabla^2 D\pi(t) \nonumber\\				
				&\geq   D(\lambda(t-1)) -  \Vert \nabla D(\lambda(t-1))-\mu(t) \Vert \Vert \pi(t) \Vert \nonumber\\
				& +(\frac{1}{\epsilon} - \sigma_{max}(X))\left\Vert \pi(t) \right\Vert^2 \nonumber\\
				&\geq  D(\lambda(t-1)) - 2\Vert X \Vert_F \sum_{t'=t-2T_d - T_a}^{t-1}\Vert\pi(t')\Vert \Vert\pi(t)\Vert \nonumber\\
				& +(\frac{1}{\epsilon} - \sigma_{max}(X)) \left\Vert \pi(t) \right\Vert^2 \nonumber\\
				&\geq  D(\lambda(t-1)) + (\frac{1}{\epsilon} - \sigma_{max}(X))\left\Vert \pi(t) \right\Vert^2 \nonumber\\
				& - \Vert X \Vert_F \sum_{t' = t-2T_d - T_a}^{t-1} (\Vert \pi(t)\Vert^2 + \Vert \pi(t')\Vert^2) 
\end{align}

Sum (\ref{eq:D_inc}) from $t=1$ to $T$, we have
\begin{align} \label{eq:D_sum}
D(\lambda(T))  &\geq   D(\lambda(0)) + (\frac{1}{\epsilon} - \sigma_{max}(X))\sum_{t=1}^{T} \Vert \pi(t)\Vert^2 \nonumber \\
& \  - \Vert X \Vert_F \sum_{t=1}^{T} \sum_{t' = t-2T_d - T_a}^{t-1} (\Vert \pi(t)\Vert^2 + \Vert \pi(t')\Vert^2) \nonumber \\
&\geq D(\lambda(0)) + (\frac{1}{\epsilon} - \sigma_{max}(X))\sum_{t=1}^{T} \Vert \pi(t)\Vert^2 \nonumber\\
& \ - \Vert X \Vert_F (2T_d + T_a) \sum_{t=1}^{T} \Vert \pi(t)\Vert^2 \nonumber\\
& \ - \Vert X \Vert_F \sum_{t=1}^{T} \sum_{t' = t-2T_d - T_a}^{t-1}  \Vert \pi(t')\Vert^2 \nonumber\\
& \geq D(\lambda(0)) + (\frac{1}{\epsilon} - A)\sum_{t=1}^{T} \Vert \pi(t)\Vert^2 
\end{align}
where $A = \sigma_{max}(X) +2\Vert X\Vert_F (2T_d+T_a)$. Select $\epsilon$ small enough such that $$ \frac{1}{\epsilon} - A > 0$$
Since $D(\lambda(T))$ is upper bounded (assume the primal problem is feasible), by (\ref{eq:D_sum}), $\sum_{t=0}^{T}\Vert\pi(t)\Vert^2$ is also upper bounded. Because $\sum_{t=0}^{T}\Vert\pi(t)\Vert^2$ is a series consisting of non-negative terms, we must have $$ \Vert\pi(t)\Vert \rightarrow 0 (t \rightarrow \infty).$$
\end{proof}

We now prove Theorem \ref{thm:asyn}.
\begin{proof}
We first show there must exist one accumulation point of the sequence $\{\lambda(t)\}$. According to (\ref{eq:D_sum}), $D(\lambda(t))$ is lower bounded by $D(\lambda(0))$, so $\underline{\lambda}(t) - \bar{\lambda}(t)$ is bounded, and $\bar{\lambda}(t)^T(v^{par}-\bar{v}) + \underline{\lambda}(t)^T(\underline{v}-v^{par})$ is lower bounded. Since for each $i$ at least one of $(v_i^{par}-\bar{v}_i)$ and $(\underline{v}_i-v_i^{par})$ is negative, we must have $\underline{\lambda}_i(t)$ and $\bar{\lambda}_i(t)$ are bounded. So the set $\{\lambda\geq 0 | D(\lambda)\geq D(\lambda(0))\}$ is compact, then the existence of accumulation points follows from Weierstrass Theorem \cite{rudin1964principles}.

We next show every accumulation point $\lambda^*$ of $\{\lambda(t)\}$ maximizes the dual problem (\ref{eq:dual}). Let subsequence ${\lambda(t_k)}$ converges to $\lambda^*$ with $t_{k+1}-t_k \geq T_a, \forall k$. Define $t_{i,k} = \max \{t\leq t_k-1: t+1 \in T_i\}$. By Assumption \ref{assump:2}, $\{t_{i,k}\}_{k=1}^{\infty}$ satisfies $t_k - T_a \leq t_{i,k}< T_k$ and hence $t_{i,k}\rightarrow \infty$ as $k\rightarrow\infty$.
With Lemma~\ref{lem:Dmuerror} (i), $$\left\Vert\nabla D(\lambda(t-1)) - \mu(t)\right\Vert \leq 2\Vert X\Vert_F \sum_{t'=t-2T_d-T_a}^{t-1}\left\Vert\pi(t')\right\Vert \rightarrow 0$$
Therefore, $$\lim_k \bar{\mu}_i(t_{k}+1)=\lim_k \frac{\partial D(\lambda(t_{k}))}{ \partial\bar{\lambda}_i} = \frac{\partial D(\lambda^*)}{\partial\bar{\lambda}_i}$$ 
By Lemma~\ref{lem:nablapi}  (ii), $\Vert v(t) - v(t-1)\Vert\rightarrow 0$, hence 
$$\Vert \bar{\mu}_i(t_k+1) - \bar{\mu}_i(t_{i,k}) \Vert \leq \sum_{t' = t_k-T_d-T_a}^{t_k+1} |v_i(t')-v_i(t'-1)| \rightarrow 0$$
Therefore $\lim_k \bar{\mu}_i(t_{i,k}) = \lim_k \bar{\mu}_i(t_{k}+1) = \partial D(\lambda^*)/\partial\bar{\lambda}_i$. Also since $$|\bar{\lambda}_i(t_k) - \bar{\lambda}_i(t_{i,k})| \leq \sum_{t' = t_k - T_a}^{t_k} |\bar{\pi}_i(t')|\rightarrow 0$$
we have $\lim_k \bar{\lambda}_i(t_k) = \lim_k\bar{\lambda}_i(t_{i,k})$. So,
\begin{align*}
[\bar{\lambda}_i^*+\epsilon \frac{\partial \nabla D(\lambda^*)}{\partial \bar{\lambda}_i}]^+ - \bar{\lambda}_i^* &= \lim_k \Big[ [\bar{\lambda}_i(t_k)+\epsilon\bar{\mu}_i(t_k+1)]^+ - \bar{\lambda}_i(t_k) \Big] \\
   &= \lim_k \Big[ [\bar{\lambda}_i(t_{i,k})+\epsilon \bar{\mu}_i(t_{i,k})]^+ - \bar{\lambda}_i(t_{i,k})\Big] \\
   &= \lim_k \bar{\pi}_i(t_{i,k}+1) = 0
\end{align*}
Similarly, $[\underline{\lambda}_i^*+\epsilon \frac{\partial \nabla D(\lambda^*)}{\partial \underline{\lambda}_i}]^+ - \underline{\lambda}_i^* =0$. Apply this equality for all $i$ and we can get $[\lambda^*+\epsilon \nabla D (\lambda^*)]^+ = \lambda^*$. Apply Projection Theorem\cite{bertsekas1999nonlinear}, we have $$-\epsilon\nabla D(\lambda^*)^T(\lambda-\lambda^*)\geq 0,\  \  \forall \lambda \geq 0$$
Which establishes the optimality of $\lambda^*$ in the dual problem, and thus also the optimality of $\underline{\lambda}^*- \bar{\lambda}^*$ in the primal problem.

Next we will prove the uniqueness of the maximizers of the dual problem (\ref{eq:dual}). Let $\lambda'$ be a maximizer of $D(\lambda)$, then $\underline{\lambda}'-\bar{\lambda}'$ is the minimizer of the primal problem. Since $X$ is positive definite, the primal problem has a unique minimizer. Hence $\underline{\lambda}'-\bar{\lambda}'$ is unique. By complementary slackness in KKT condition\cite{bertsekas1999nonlinear}, $\forall i$, $\bar{\lambda}_i'([X(\underline{\lambda}'-\bar{\lambda}')]_i+v^{par}_i-\bar{v}_i)=0$ and $\underline{\lambda}_i'(\underline{v}_i-[X(\underline{\lambda}'-\bar{\lambda}')]_i-v^{par}_i)=0$. Because $([X(\underline{\lambda}'-\bar{\lambda}')]_i+v^{par}_i-\bar{v}_i)$ and $(\underline{v}_i-[X(\underline{\lambda}'-\bar{\lambda}')]_i-v^{par}_i)$ cannot be both zero, at least one of $\bar{\lambda}'_i$ and $\underline{\lambda}'_i$ has to be zero. Combining this with the fact that $\underline{\lambda}'- \bar{\lambda}'$ is unique, we have $\lambda'$ is unique, i.e. the dual problem (\ref{eq:dual}) has a unique maximizer. 

At last, since any accumulation point of $\{\lambda(t)\}$ is the maximizer of (\ref{eq:dual}), the accumulation point of $\{\lambda(t)\}$ is unique. According to \cite{rudin1964principles} this means $\lambda(t)$ will converge to the maximizer of the dual problem (\ref{eq:dual}). Also, by (\ref{eq:asyn_noupdate}), (\ref{eq:asyn_update}) and Assumption \ref{assump:2}, $q(t)$ will converge to the minimizer of the primal problem (\ref{eq:min_q}) and $v(t)$ will converge to a point $v^*$ in $[\underline{v},\bar{v}]$.



\end{proof}

\end{appendix}
\end{document}